\let\oldtocsubsection=\tocsubsection
\renewcommand{\tocsubsection}[2]{\hspace{1.9pc}\oldtocsubsection{#1}{#2}}
\g@addto@macro\bfseries{\boldmath}
\DeclareSymbolFont{myoperators}{T1}{AlegreyaSans-LF}{sb}{n}
\DeclareSymbolFontAlphabet{\mathsf}{myoperators}
\renewcommand{\operator@font}{\mathgroup\symmyoperators}
\DeclareSymbolFont{stixaccent}{LS1}{stix2}{m}{it}
\DeclareMathAccent{\stixwidehat}{\mathord}{stixaccent}{"9A}
\DeclareMathSymbol{\stixmodels}{\mathrel}{stixaccent}{"F5}
\newcommand{\soutc}{\bgroup\markoverwith{\textcolor{cyan}{\rule[0.4ex]{1pt}{1.7pt}}}\ULon}
\renewcommand{\@fnsymbol}[1]{%
\ifcase#1\hbox{} \or \mbox{\ding{118}} \or \mbox{\ding{168}} \or \mbox{\ding{74}} \or \mbox{\ding{116}} \or \mbox{\ding{110}} \or \mbox{\ding{115}} \or \mbox{\ding{94}} \or \mbox{\ding{117}} \or \mbox{\ding{108}} \or \mbox{\ding{67}}
\else\@ctrerr\fi\relax}
\DeclareMathOperator{\lpow}{LPOW}
\DeclareMathOperator{\mpow}{MPOW}
\DeclareMathOperator{\ppow}{PPOW}
\DeclareMathOperator{\pow}{POW}
\DeclareMathOperator{\Char}{char}
\newcommand{\F}{ \varmathbb{F} }
\newcommand{\Q}{ \varmathbb{Q} }
\newcommand{\Z}{ \varmathbb{Z} }
\newcommand{\Zp}{ \varmathbb{Z}^{+} }
\newcommand{\CF}{ \mathcal{F} }
\newcommand{\CH}{ \mathcal{H} }
\newcommand{\CL}{ \mathcal{L} }
\newcommand{\FM}{ \mathfrak{M} }
\bmdefine{\CX}{ \mathcal{X} }
\bmdefine{\CZ}{ \mathcal{Z} }
\bmdefine{\Lor}{ \lor }
\bmdefine{\Land}{ \land }
\bmdefine{\To}{ \mathrel{ { \xrightarrow{ \hspace{3mm} } } } }
\bmdefine{\Lnot}{ \lnot }
\bmdefine{\Exists}{ \exists }
\bmdefine{\Forall}{ \forall }
\DeclareSymbolFont{symbolsN}{LMS}{npxsy}{b}{n}
\DeclareSymbolFont{largesymbolsN}{LMX}{npxexx}{b}{n}
\DeclareMathDelimiter{\Lparen}{\mathopen}{symbolsN}{185}{largesymbolsN}{0}
\DeclareMathDelimiter{\Rparen}{\mathclose}{symbolsN}{186}{largesymbolsN}{1}
\DeclareMathDelimiter{\Lbrack}{\mathopen}{symbolsN}{187}{largesymbolsN}{2}
\DeclareMathDelimiter{\Rbrack}{\mathclose}{symbolsN}{188}{largesymbolsN}{3}
\newcommand{\InterpEq}{ { =_{ \Gamma } } } 
\newcommand{\InterpLeq}{ { \leq_{ \Gamma } } } 
\newcommand{\InterpDiv}{ { \bigm|_{ \Gamma } } } 
\DeclareTextFontCommand{\noun}{\scshape} 
\numberwithin{equation}{section} 
\def\print@backrefs#1{\space\SentenceSpace[cited on page(s) \csname br@#1\endcsname]}
\setlist[enumerate,1]{ label = \texttt{\textup{\alph*.}} }
\theoremstyle{plain}
\newmdtheoremenv[ default , style = barra ]{theorem}[dummy]{Theorem}
\newmdtheoremenv[ default , style = barra ]{corollary}[dummy]{Corollary}
\newmdtheoremenv[ default , style = barra ]{proposition}[dummy]{Proposition}
\newmdtheoremenv[ default , style = barra ]{lemma}[dummy]{Lemma}
\theoremstyle{definition}
\newmdtheoremenv[ default , style = barra ]{definition}[dummy]{Definition}
\newmdtheoremenv[ default , style = barra ]{example}[dummy]{Example}
\newmdtheoremenv[ default , style = barra ]{remark}[dummy]{Remark}
\crefname{theorem}{theorem}{theorems}
\crefname{corollary}{corollary}{corollaries}
\crefname{proposition}{proposition}{propositions}
\crefname{lemma}{lemma}{lemmas}
\crefname{definition}{definition}{definitions}
\crefname{example}{example}{examples}
\crefname{remark}{remark}{remarks}
\newlist{enumtheorem}{enumerate}{1}
\setlist[enumtheorem]{
label = \texttt{\textup{\alph*.}} ,
ref = \thetheorem\texttt{\textup{\alph*}} ,
}
\newlist{enumproposition}{enumerate}{1}
\setlist[enumproposition]{
label = \texttt{\textup{\alph*.}} ,
ref = \theproposition\texttt{\textup{\alph*}} ,
}
\newlist{enumlemma}{enumerate}{1}
\setlist[enumlemma]{
label = \texttt{\textup{\alph*.}} ,
ref = \thelemma\texttt{\textup{\alph*}} ,
}
\newcounter{subcreftmpcnt}%
\newcommand\alphsubformat[1]{\texttt{\textup{\alph{#1}}}}
\newcommand\subcref[2][\alphsubformat]{%
\ifcsname r@#2@cref\endcsname
\cref@getcounter {#2}{\mylabel}%
\setcounter{subcreftmpcnt}{\mylabel}%
\hyperref[#2]{\alphsubformat{subcreftmpcnt}}%
\else ?? \fi}
\begin{document}

\title[Undecidability of indecomposable polynomial rings]{Remarks on undecidability of \\ indecomposable polynomial rings}

\author{Marco Barone}
\address{Departamento de Letras \\ Universidade Federal de Pernambuco \\ Avenida da Arquitetura, S/N - Cidade Universitária \\ Recife/PE - Brasil - 50740-550}
\email[M.~Barone]{marco.barone@ufpe.br}

\author{Nicolás Caro-Montoya}
\address{Departamento de Matemática \\ Universidade Federal de Pernambuco \\ Avenida Jornalista Aníbal Fernandes, S/N - Cidade Universitária \\ Recife/PE - Brasil - 50740-560}
\email[N.~Caro-Montoya]{jorge.caro@ufpe.br}

\author{Eudes Naziazeno}
\email[E.~Naziazeno]{eudes.naziazeno@ufpe.br}

\subjclass[2020]{03B10,03B25,03C40,13B25,13F99,16U99.}
\keywords{Definability, undecidability, interpretability, polynomial rings, language of rings}

\begin{abstract}

We prove that arithmetic is interpretable in any indecomposable polynomial ring (in any set of variables), and in addition we provide an alternative uniform proof of undecidability for all members in this class of rings.

\end{abstract}

\maketitle 

\tableofcontents

\section{Introduction}

\noindent The purpose of this note is twofold: first, after some preliminary definitions and results (\Cref{subsection-preliminaries}), we provide two interpretations of arithmetic in polynomial rings in one variable over a field.

The first interpretation (\Cref{subsection-interpretability-R[x]-R-field-char>0}) works uniformly in the case of positive characteristic; in particular, it does not depend on such value of the characteristic. The bulk of required results are consequence of the far more general treatment from \cite{BCN-main}; however, the construction in this particular case can be carried out in a direct and elementary way. Moreover, though this result is apparently well-known, we were not able to find a specific source containing a full proof of it, and so this presentation is desirable for future reference.

The second interpretation already works uniformly for polynomial rings (in any set of variables) over reduced indecomposable coefficient rings that are not fields. In order to extend its scope (\Cref{subsection-interpretability-R[x]-R-field-infinite-multiplicative}), we prove a technical result for polynomial rings in one variable over a field having elements of infinite multiplicative order; in the course of the proof we characterize such fields. In this way, the second interpretation also works for the referred class of coefficient fields (\Cref{theorem-big-interpretation}). Since this subclass includes the characteristic zero case, the two constructed interpretations deal with the entire class of univariate polynomial rings over a field.

Our second aim (\Cref{section-R[X]-indecomposable}) is to extend, to the larger class of indecomposable polynomial rings, our previous interpretability/undecidability results appearing in \cite{BCN-main}, valid only for \emph{reduced} indecomposable polynomial rings. To this end, we use the fact that the nilradical $N_{S}$ of a polynomial ring $S$ is definable and that $S / N_{S}$ is isomorphic to a reduced indecomposable polynomial ring whenever $S$ is indecomposable.

\subsection{Preliminaries} \label{subsection-preliminaries}

\noindent All the rings $S$ considered are commutative and unital. The set of units (invertible elements) of $S$ is denoted by $S^{*}$. An element $a\in S$ is said to be

\begin{itemize}[itemsep = 2mm]

\item \textbf{Nilpotent}, if $a^{n} = 0$ for some $n \geq 1$.

\item \textbf{Idempotent}, if $a^{2} = a$.

\item \textbf{Regular}, if $ab = ac$ with $b , c \in S$ implies $b = c$.

\item \textbf{Prime}, if $a \notin \{ 0 \} \cup S^{*}$, and whenever $a$ divides a product, it divides some of the factors.

\end{itemize}
The ring $S$ is said to be \textbf{reduced} if its only nilpotent element is zero, and \textbf{indecomposable} if its only idempotent elements are $0$ and $1$. We focus on reduced/indecomposable polynomial rings: for a set $\CX$ of indeterminates over a ring $R$, the polynomial ring $R[ \CX ]$ is reduced \textup{(}resp.~indecomposable\textup{)} if and only if $R$ is reduced \textup{(}resp.~indecomposable\textup{)} \cite{BCN20}*{Proposition 4.6}. Finally, the class of reduced indecomposable (polynomial) rings strictly includes that of (polynomial) integral domains \cite{BCN20}*{Section 4.2}.

Next, we introduce the two central notions of this note, namely: uniform interpretation of one structure in a class of structures, and uniform translation of sentences about one structure to sentences about structures in a class (see \cite{Hod93}*{Section 5.3} for a general discussion on these concepts). In what follows $\CL$ and $\CL'$ are signatures, $\FM$ is a $\CL$-structure with domain $M$, and $\FM'$ is an $\CL'$-structure with domain $M'$.
\vspace{4mm}
\begin{definition} \label{definition-interpretation}

Let $n$ be a positive integer. A \textbf{$\bm{n}$-dimensional interpretation $\bm{ \Gamma }$ of $\bm{ \FM' }$ in $\bm{ \FM }$} is defined to consist of the following:

\begin{itemize}[ itemsep = 3mm , topsep = 4mm ]

\item A $n$-variable formula $\partial_{ \Gamma }$ of signature $\CL$ (the \textbf{domain formula of} $\bm{ \Gamma }$).

\item A surjective map $F_{ \Gamma } \colon \Delta_{ \Gamma } \to M'$, where $\Delta_{ \Gamma }$ is the subset of $M^{n}$ defined by the formula $\partial_{ \Gamma }$ (the \textbf{coordinate map of} $\bm{ \Gamma }$).

\item A $2n$-variable formula $\InterpEq$ of signature $\CL$ that satisfies, for any $\bm{a} , \bm{b} \in \Delta_{ \Gamma }$, the equivalence
\[
\FM \stixmodels \bm{a} \InterpEq \bm{b} \iff \FM' \stixmodels F_{ \Gamma } ( \bm{a} ) = F_{ \Gamma } ( \bm{b} ) \, .
\]

\item For each $m \geq 1$ and each $m$-ary relation symbol $Q$ in $\CL'$, a $mn$-variable formula $Q_{ \Gamma }$ of signature $\CL$ that satisfies, for any $\bm{a}_{1} , \dotsc , \bm{a}_{m} \in \Delta_{ \Gamma }$, the equivalence
\[
\FM \stixmodels {Q_{ \Gamma }} ( \bm{a}_{1} , \dotsc , \bm{a}_{m} ) \iff \FM' \stixmodels Q \bigl( F_{ \Gamma }( \bm{a}_{1} ) , \dotsc , F_{ \Gamma }( \bm{a}_{m} ) \bigr) \, .
\]

\item For each $m \geq 0$ and each $m$-ary function symbol $G$ in $\CL'$ (so that $G$ is a constant symbol whenever $m = 0$), a $( m + 1 ) n$-variable formula $G_{ \Gamma }$ of signature $\CL$ that satisfies, for any $\bm{a}_{1} , \dotsc , \bm{a}_{m} , \bm{b} \in \Delta_{ \Gamma }$, the equivalence
\[
\FM \stixmodels {G_{ \Gamma }} ( \bm{a}_{1} , \dotsc , \bm{a}_{m} , \bm{b} ) \iff \FM' \stixmodels G \bigl( F_{ \Gamma }( \bm{a}_{1} ) , \dotsc , F_{ \Gamma }( \bm{a}_{m} ) \bigr) = F_{ \Gamma }( \bm{b} ) \, .
\]

\end{itemize}
If $\FM$ varies on a class $\CH$ of $\CL$-structures but the domain formula and the formulas $=_{ \Gamma } , Q_{ \Gamma }$ and $G_{ \Gamma }$ remain fixed, we say that the interpretation $\Gamma$ of $\FM'$ is \textbf{uniform} in $\CH$.
\end{definition}

\begin{definition} \label{definition-translation}

A \textbf{translation of $\bm{ \CL' }$-sentences about $\bm{ \FM' }$ to $\bm{ \CL }$-sentences about $\bm{ \FM }$} is a mapping that assigns, to each $\CL'$-sentence $\varphi$, a $\CL$-sentence $\stixwidehat{ \varphi }$ in such a way that $\FM \stixmodels \stixwidehat{ \varphi } \iff \FM' \stixmodels \varphi$. If such a mapping works for $\FM$ varying in a class $\CH$ of $\CL$-structures, we say that the translation of $\CL'$-sentences about $\FM'$ to $\CL$-sentences about the members of $\CH$ is \textbf{uniform}.

\end{definition}
\vspace{3mm}
\noindent For example, if $\FM , \FM'$ and $\Gamma$ are as in~\Cref{definition-interpretation}, then there is a natural translation $\stixwidehat{ \Gamma }$ of $\CL'$-sentences about $\FM'$ to $\CL$-sentences about $\FM$ \cite{Hod93}*{Theorem 5.3.2}. Moreover, such a translation is uniform whenever the interpretation $\Gamma$ is.

\begin{definition} \label{definition-induced-translation}

The translation $\stixwidehat{ \Gamma }$ is called the \textbf{translation of sentences induced by} $\bm{ \Gamma }$.

\end{definition}
\vspace{3mm}
\noindent Notice that a translation of $\CL'$-sentences about $\FM'$ to $\CL$-sentences about $\FM$ allows to conclude the undecidability of the full theory of $\FM$ from that of the full theory of $\FM'$. In addition, if this translation is uniform, then we can say that such a proof of undecidability is ``uniform'' across all $\CL$-structures in the class considered.

In our context we take $\FM'$ as the structure $( \Z , + , \times )$, the \textbf{(first-order) arithmetic}, which is known to be undecidable \cite{RobR51}*{p.~137}, and consider $\FM$ varying in a certain class of polynomial rings, which are also $( + , \times )$-structures. Some of the main results of this work, namely, \Cref{theorem-interpretation-R[x]-R-field-char-p,theorem-undecidability-R[X]-reduced-indecomposable}, deal with two auxiliary structures. These structures interpret arithmetic, as we show below. Since translation of sentences and interpretations can be composed (for the latter see \cite{Hod93}*{p.~218}), these results will be enough for our purposes.

\begin{proposition} \label{proposition-auxiliary-structures}

Arithmetic is interpretable in the structures \( ( \Zp , + , \mid \, ) \) and \( ( \Zp , \leq , \mid \, ) \). Consequently, the full theory of these structures is undecidable.

\end{proposition}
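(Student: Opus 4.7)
My plan is to reduce both claims to exhibiting an interpretation of positive-integer arithmetic $(\Zp, +, \times)$ in each of the two listed structures, since $(\Z, +, \times)$ itself admits the well-known one-dimensional interpretation in $(\Zp, +, \times)$ representing each integer as a formal difference $a - b$ under the equivalence $(a, b) \sim (a', b') \iff a + b' = a' + b$. Once these interpretations are in hand, the undecidability claim follows by composing with the induced translation (\Cref{definition-induced-translation}) and invoking the undecidability of $(\Z, +, \times)$ from \cite{RobR51}.

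For $(\Zp, +, \mid)$ I would construct the interpretation directly. The $\mid$-least common multiple $\text{lcm}(a, b)$ is definable from $\mid$ alone as the $\mid$-minimum among common multiples, and since any two consecutive positive integers are coprime one has $\text{lcm}(n, n+1) = n(n+1) = n^{2} + n$; hence the graph of squaring is defined by the $\{+, \mid\}$-formula $y + n = \text{lcm}(n, n+1)$. Multiplication is then recovered through the polarization identity
\[
c = ab \iff c + c + a^{2} + b^{2} = (a + b)^{2} ,
\]
which, after substituting the squaring formula, becomes a $\{+, \mid\}$-formula in the variables $a, b, c$, yielding a one-dimensional interpretation of $(\Zp, +, \times)$ in $(\Zp, +, \mid)$.

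For $(\Zp, \leq, \mid)$, I would first note that the successor function is definable from $\leq$ as the immediate $\leq$-successor, and then appeal to the classical theorem of J.~Robinson to the effect that $(\Zp, +, \times)$ is interpretable in $(\Zp, S, \mid)$, obtaining the required interpretation by composition. The main obstacle lies precisely in this second case: unlike in $(\Zp, +, \mid)$, where the $\text{lcm}$ trick yields a short self-contained definition of multiplication, no comparably elementary polynomial identity is available starting from $\leq$ and $\mid$ alone, so some nontrivial coding of arithmetic through the divisibility lattice (as in Robinson's construction) is essentially unavoidable at this level.
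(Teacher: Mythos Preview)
Your proposal is correct and follows essentially the same route as the paper: for $(\Zp,+,\mid)$ both you and the paper define $\operatorname{lcm}$ from $\mid$, use $\operatorname{lcm}(n,n+1)=n(n+1)$, and recover multiplication via a polarization identity (the paper packages the squaring and polarization steps into the single equivalence $j=km \iff \operatorname{lcm}(k+m,k+m+1)=\operatorname{lcm}(k,k+1)+\operatorname{lcm}(m,m+1)+2j$, but this is the same computation); for $(\Zp,\leq,\mid)$ both define successor from $\leq$ and then invoke \noun{J.~Robinson}'s theorem; and both finish with the standard construction of $\Z$ from $\Zp$. One small slip: that last interpretation, via pairs $(a,b)$ modulo $a+b'=a'+b$, is \emph{two}-dimensional, not one-dimensional.
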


\begin{proof}

Notice that $1$ is definable in $( \Zp , \mid \, )$ by the formula $\Forall a \, \Lparen \, t \mid a \, \Rparen$. For any positive integers $j , k , m$ we have the equivalences \cite{RobR51}*{p.~146}
\vspace{1mm}
\begin{alignat*}{3}
\hspace{\leftmarginiv} & \bullet \hspace{\labelsep}     &     j & = k ( k + 1 )     &     \ & \iff \ j = \operatorname{lcm} ( k , k + 1 ) \, ; \\[2mm]
\hspace{\leftmarginiv} & \bullet \hspace{\labelsep}     &     j & = km              &     \ & \iff \ ( k + m ) ( k + m + 1 ) = k ( k + 1 ) + m ( m + 1 ) + 2j \, ,
\shortintertext{ hence }
\hspace{\leftmarginiv} & \bullet \hspace{\labelsep}     &     j & = km              &     \ & \iff \ \operatorname{lcm} ( k + m , k + m + 1 ) = \operatorname{lcm} ( k , k + 1 ) + \operatorname{lcm} ( m , m + 1 ) + 2j \, ,
\end{alignat*}

\noindent and the notion of least common multiple is clearly expressible in terms of divisibility. This shows that $( \Zp , + , \times )$ is interpretable in $( \Zp , + , \mid \, )$. On the other hand, addition and multiplication in $\Zp$ are first-order definable in terms of the successor operation and the relation of divisibility, by virtue of a result of \noun{Julia Robinson} \cite{RobJ49}*{Theorems 1.1 and 1.2}. Since the successor operation is first-order definable in terms of the usual order relation in $\Zp$, we conclude that $( \Zp , + , \times )$ is also interpretable in $( \Zp , \leq , \mid \, )$.

Finally, a two-dimensional interpretation of arithmetic in the structure $( \Zp , + , \times )$ can be given, via the well-known construction of the integers from the positive integers. By composing this interpretation with each one of the interpretations previously constructed we obtain the desired result.
\end{proof}

\noindent Our way to interpret positive integers in our rings of interest will be through sets of positive powers of suitable elements in these rings.

\begin{definition} \label{definition-LMPPOW-L}

For a ring $S$ and an element $\ell \in S$, we define the following sets:

\begin{itemize}[ itemsep = 2mm ]

\item $\pow ( \ell )$ is the set of positive powers of $\ell$.

\item $\lpow ( \ell )$ is the set consisting of elements $f \in S$ (also referred to in~\cite{BCN20} as \textbf{logical powers} of $\ell$) satisfying:

      \begin{itemize}[ itemsep = 1mm, topsep = 2mm ]
      
      \item $\ell$ divides $f$;
      
      \item $\ell - 1$ divides $f - 1$;
      
      \item Every divisor of $f$ is a unit or a multiple of $\ell$.
      
      \end{itemize}

\end{itemize}
In the particular case $\Char (S) > 0$ we define the following sets:

\begin{itemize}[ itemsep = 2mm , topsep = 2mm ]

\item $\mpow ( \ell )$ is the set of powers of $\ell$ with exponent a positive multiple of $\Char(S)$.

\item $\ppow ( \ell )$ is the set of powers of $\ell$ with exponent a positive power of $\Char(S)$.

\end{itemize}
In the particular case when $S = R[x]$, with $x$ an indeterminate over a field $R$,

\begin{itemize}[ topsep = 2mm ]

\item $L$ is the set of degree $1$ polynomials in $S$.

\end{itemize}

\end{definition}
\vspace{5mm}
\noindent Notice that the sets $\lpow( \ell )$ are first-order definable across all rings, using $\ell$ as parameter.
\vspace{3mm}
\begin{lemma} \label{lemma-POW-subset-LPOW}

Let \( S \) be a ring. If \( \ell \in S \) is regular and prime, then \( \pow ( \ell ) \subseteq \lpow ( \ell ) \).

\end{lemma}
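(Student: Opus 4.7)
The plan is to fix $n \geq 1$ and verify each of the three defining conditions of $\lpow(\ell)$ for the element $\ell^{n}$. The first two conditions are essentially formal: $\ell$ divides $\ell^{n}$ because $n \geq 1$, and $\ell - 1$ divides $\ell^{n} - 1$ via the standard factorization $\ell^{n} - 1 = ( \ell - 1 )( \ell^{n-1} + \ell^{n-2} + \dotsb + \ell + 1 )$, which holds in any commutative ring.

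The content of the lemma lies in the third condition: every divisor of $\ell^{n}$ is either a unit or a multiple of $\ell$. I would prove this by induction on $n$, with both the primality and the regularity of $\ell$ entering the argument. Suppose $d$ divides $\ell^{n}$, so $de = \ell^{n}$ for some $e \in S$. Since $\ell$ is prime and divides $de$, either $\ell \mid d$ (in which case we are done) or $\ell \mid e$. In the latter case write $e = \ell e_{1}$, so that $\ell ( d e_{1} ) = \ell \cdot \ell^{n-1} \cdot \ell = \ell \cdot \ell^{n-1}$ (more precisely, $d \ell e_{1} = \ell^{n}$ gives $\ell ( d e_{1} ) = \ell \cdot \ell^{n-1}$), and the regularity of $\ell$ allows us to cancel it to conclude $d e_{1} = \ell^{n-1}$.

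This reduces the divisibility problem from $\ell^{n}$ to $\ell^{n-1}$, which is exactly what powers the induction. For the base case $n = 1$, the equation $d e_{1} = 1$ obtained after the cancellation directly shows that $d$ is a unit; the inductive step is then immediate from the hypothesis applied to $d$ as a divisor of $\ell^{n-1}$.

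The only mild subtlety is making sure that regularity is genuinely needed to justify the cancellation of $\ell$ in a general commutative ring (where $\ell$ prime does not by itself guarantee cancellability), and that the induction correctly handles the base case; neither is a real obstacle. No other hypothesis besides regularity and primality of $\ell$ is used, so the conclusion $\pow ( \ell ) \subseteq \lpow ( \ell )$ follows.
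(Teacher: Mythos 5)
Your proof is correct: both conditions $\ell \mid \ell^{n}$ and $\ell - 1 \mid \ell^{n} - 1$ are handled exactly as in the paper, and your inductive verification of the third condition is sound, with primality splitting the factorization $de = \ell^{n}$ and regularity justifying the cancellation of $\ell$. The route differs from the paper's in its organization: the paper fixes a divisor $g$ with $\ell^{n} = gh$, argues (via cancellation and $\ell \nmid 1$) that $\ell^{n+1}$ cannot divide $h$, extracts the \emph{largest} power $\ell^{k}$ dividing $h$ with $k \leq n$, cancels it to reach $\ell^{n-k} = g\stixwidehat{h}$ with $\stixwidehat{h}$ not a multiple of $\ell$, and then concludes either $g \in S^{*}$ (if $k = n$) or $\ell \mid g$ by primality. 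Your induction on the exponent strips one factor of $\ell$ at a time, which buys you a cleaner argument in one respect: you never need to justify that a maximal $\ell$-power dividing the cofactor exists, nor the bound $k \leq n$, since the descent terminates automatically at the base case $de_{1} = 1$. The paper's version, in exchange, is non-inductive and disposes of the divisor in a single pass. The only blemish is the garbled intermediate equation ``$\ell ( d e_{1} ) = \ell \cdot \ell^{n-1} \cdot \ell = \ell \cdot \ell^{n-1}$'', which you immediately correct in the parenthetical; in a final write-up just state $d \ell e_{1} = \ell^{n} = \ell \cdot \ell^{n-1}$ and cancel.
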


\begin{proof}

Let $n \geq 1$. Obviously $\ell \mid \ell^{n}$ and $\ell-1 \mid \ell^{n} - 1$, and if $g$ is a divisor of $\ell^{n}$, say $\ell^{n} = gh$, then $\ell^{ n + 1 }$ cannot divide $h$ (otherwise we would have, by canceling, that $\ell$ divides $1$, which contradicts the primality of $\ell$). Thus, the largest $k$ with $\ell^{k}$ dividing $h$ must satisfy $k\leq n$. After canceling we get $\ell^{ n - k } = g \stixwidehat{h}$, with $\stixwidehat{h}$ not a multiple of $\ell$. If $k = n$, then $g$ is invertible; otherwise, $\ell$ divides $g \stixwidehat{h}$, so necessarily $\ell$ divides $g$ because $\ell$ is prime.
\end{proof}

\begin{proposition} \label{proposition-L-uniformly-definable}

The set \( L \) is uniformly definable across all the polynomial rings in one variable over a field, irrespective of its characteristic. For such rings we have \( \pow ( \ell ) = \lpow ( \ell ) \), for each \( \ell \in L \). In particular, the sets of positive powers of elements in \( L \) are uniformly definable with parameter \( \ell \in L \).
 
\end{proposition}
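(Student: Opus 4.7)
My plan is to first pin down a uniform first-order definition of $L$ in the language of rings, and then derive $\pow(\ell) = \lpow(\ell)$ by combining \Cref{lemma-POW-subset-LPOW} with unique factorization in $R[x]$. For the definition of $L$, I would exploit that when $R$ is a field, an irreducible $\ell \in R[x]$ has degree $1$ if and only if the natural map $R \to R[x]/(\ell)$ is surjective, equivalently, every polynomial is congruent modulo $\ell$ to an element of $R$. Since the set of constants $R = R[x]^{*} \cup \{0\}$ is itself uniformly first-order definable in $R[x]$ across all fields $R$, this gives the formula
\[
\psi_{L}(\ell) \,\equiv\, \bigl( \ell \text{ is prime} \bigr) \Land \Forall y \, \Exists u \, \Lparen \ell \mid ( y - u ) \Land \bigl( u = 0 \Lor \Exists v \, ( uv = 1 ) \bigr) \Rparen ,
\]
which is uniform across all univariate polynomial rings over a field.

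With $L$ so defined, the inclusion $\pow(\ell) \subseteq \lpow(\ell)$ for $\ell \in L$ is immediate from \Cref{lemma-POW-subset-LPOW}: any $\ell \in L$ is regular (the ring $R[x]$ is a domain) and prime (being irreducible of degree $1$).

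The substantive part is the reverse inclusion. Given $f \in \lpow(\ell)$, the third condition in the definition of $\lpow$, combined with unique factorization in $R[x]$, forces every irreducible factor of $f$ to be associate to $\ell$; hence $f = u \ell^{n}$ for some unit $u \in R^{*}$ and some $n \geq 1$ (the latter from $\ell \mid f$). I would then exploit the condition $\ell - 1 \mid f - 1 = u \ell^{n} - 1$ by evaluating at the unique root $x_{0} \in R$ of the degree $1$ polynomial $\ell - 1$, at which $\ell(x_{0}) = 1$; this gives $0 = f(x_{0}) - 1 = u - 1$, so $u = 1$ and $f = \ell^{n} \in \pow(\ell)$.

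The conceptual obstacle is isolating the right uniform first-order description of $L$; once that is in hand, the rest is a short unique-factorization-plus-evaluation argument. The field hypothesis on $R$ enters the second part twice: to make $R[x]$ a UFD, and to guarantee that $\ell - 1$ admits a root inside $R$ so that the evaluation trick is available.
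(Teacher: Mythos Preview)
Your argument is correct and follows essentially the same route as the paper: define $L$ via ``$\ell$ is nonconstant/prime and every element is congruent modulo $\ell$ to a constant,'' then use \Cref{lemma-POW-subset-LPOW} for one inclusion and unique factorization for the other. The only cosmetic differences are that the paper requires merely $\ell \notin R$ rather than $\ell$ prime in the defining formula, and that it deduces $u = 1$ by a divisibility manipulation (writing $f - 1 = u(\ell^{n} - 1) + (u - 1)$ and noting $\ell - 1 \mid u - 1 \in R$) rather than by evaluating at the root of $\ell - 1$; these are equivalent. One small omission: before invoking unique factorization you should rule out $f = 0$, which the paper does by observing that $\ell - 1 \mid f - 1$ with $\ell - 1$ a nonunit forces $f \neq 0$.
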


\begin{proof}

If $S = R[ \CX ]$, with $R$ a field and $\CX$ a set of indeterminates over $R$, then we have $R = \{ 0 \} \cup S^{*}$, and so $R$ is uniformly definable on these rings. With this first-order definition for ``$t \in R$'', consider the formula
\[
\alpha ( \ell ) \colon \ \ \ell \notin R \ \Land \ \Forall f \, \Exists q \, \Lparen \, f - q \ell \in R \, \Rparen \, .
\]
In the particular case $\CX = \{ x \}$ we have, by the division algorithm, that $\alpha ( \ell )$ holds true for every $\ell \in L$. Conversely, if $\ell \in R[x]$ satisfies $\alpha ( \, \cdot \, )$, then by taking $f = x$ we get $x - q \ell = r$, for some $q \in R[x]$ and $r \in R$. This implies that $q \ell = x - r$ has degree $1$, so necessarily $\deg( \ell ) = 1$ because $\ell \notin R$. This shows that the sets $L$ are uniformly definable.

On the other hand, if $\ell \in L$, then $\ell$ is prime and regular, hence $\pow ( \ell ) \subseteq \lpow ( \ell )$ by~\Cref{lemma-POW-subset-LPOW}. For the reverse inclusion, if $f \in \lpow ( \ell )$, then $\ell - 1 \mid f - 1$ forces to have $f \neq 0$ (because $\ell - 1 \in L$, hence $\ell - 1$ is noninvertible), and since $\ell$ divides $f$, it follows that $\deg (f) > 0$. Thus, by unique factorization we have $f = u \ell^{n}$, with $n \geq 1$ and $u$ coprime to $\ell$; in particular $u$ is not a multiple of $\ell$. Since $u$ divides $f$ and $f \in \lpow ( \ell )$, $u$ must be a unit, that is $u \in R[x]^{*} = R^{*}$. Finally, we have $f - 1 = u \ell^{n} - 1 = u \cdot ( \ell^{n} - 1 ) + u - 1$, and since $\ell - 1$ divides both $f - 1$ and $\ell^{n} - 1$, we conclude that $\ell - 1 \mid u - 1 \in R$, which implies $u - 1 = 0$, and this shows that $f = \ell^{n} \in \pow ( \ell )$.
\end{proof}

\section{Uniform interpretability of arithmetic in univariate polynomial rings over certain fields} \label{section-interpretability-R[x]-R-some-field}

\noindent In this section we provide two interpretations of arithmetic, each one working uniformly in univariate polynomial rings over fields $R$ satisfying, respectively, the conditions

\begin{itemize}

\item $\Char(R) > 0$, or

\item $R$ has elements of infinite multiplicative order.

\end{itemize}
These subclasses cover the entire class of univariate polynomial rings over a field, and they have nonempty intersection; see~\Cref{proposition-good-fields}. In what follows $R$ is a field and $x$ is an indeterminate over $R$.

\subsection{The case of coefficient fields of positive characteristic} \label{subsection-interpretability-R[x]-R-field-char>0}

\noindent Although the title of this subsection refers to fields of positive characteristic, the reader may notice that the hypotheses of~\Cref{lemma-MPOW-PPOW} below are independent of the characteristic (see the proof of~\Cref{corollary-PPOW-uniformly-definable}).

\begin{lemma} \label{lemma-MPOW-PPOW}

Let \( S \) be a ring of positive characteristic, and let \( \ell \in S \) be such that\textup{:}

\begin{itemize}[ itemsep = 2mm, topsep = 3mm ]

\item \( \ell - 1 \) is regular.

\item For any positive integers \( m \) and \( n \) we have \( \ell^{m} - 1 \mid \ell^{n} - 1 \) if and only if \( m \mid n \). \textup{(}In particular, all the positive powers of \( \ell \) are distinct.\textup{)}

\item \( 0_{S} \) is the only multiple of \( \ell - 1 \) in the set \( \CZ_{S} = \{ n \cdot 1_{S} \colon n \in \Z \} \).

\end{itemize}
For any \( y \in S \), we have the following\textup{:}

\begin{enumlemma}[ itemsep = 2mm , topsep = 3mm ]

\item \label{lemma-MPOW-PPOW-a} \( y \in \mpow ( \ell ) \) if and only if \( y \in \pow ( \ell ) \) and \( ( \ell - 1 )^{2} \mid y - 1 \).

\item \label{lemma-MPOW-PPOW-b} If in addition \( \Char (S) \) is a prime number, then \( y \in \ppow ( \ell ) \) if and only if

\begin{itemize}[ itemsep = 1mm, topsep = 2mm]

\item \( y \in \mpow ( \ell ) \), and

\item for any \( y' \in \pow ( \ell ) \) with \( y' \neq \ell \) and \( y' - 1 \mid y - 1 \), we have \( y' \in \mpow ( \ell ) \).

\end{itemize}

\end{enumlemma}

\end{lemma}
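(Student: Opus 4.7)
The plan is to reduce part (a) to a single telescoping identity comparing $\ell^n - 1$ with $n(\ell - 1)$ modulo $(\ell - 1)^2$, and then derive part (b) from (a) by a short number-theoretic observation about divisors.

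For part (a), I would start from $\ell^n - 1 = (\ell - 1)(\ell^{n-1} + \ell^{n-2} + \cdots + 1)$ and rewrite the second factor, using that $\ell - 1$ divides each $\ell^k - 1$, as
\[
\ell^{n-1} + \cdots + \ell + 1 = n \cdot 1_S + (\ell - 1)\, s
\]
for some $s \in S$. Multiplying by $\ell - 1$ gives the key identity
\[
\ell^n - 1 = n(\ell - 1) + (\ell - 1)^2 s.
\]
Therefore $(\ell - 1)^2 \mid \ell^n - 1$ if and only if $(\ell - 1)^2 \mid n(\ell - 1)$, which by the regularity of $\ell - 1$ is equivalent to $(\ell - 1) \mid n \cdot 1_S$ in $S$. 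Since $n \cdot 1_S \in \CZ_S$, the third hypothesis then forces $n \cdot 1_S = 0$, i.e., $\Char(S) \mid n$, which is precisely the condition $\ell^n \in \mpow(\ell)$. Reading the chain in reverse gives the converse and completes (a).

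For part (b), set $p = \Char(S)$ and write $y = \ell^n$, so that $p \mid n$ by part (a). The second hypothesis identifies the elements $y' \in \pow(\ell)$ with $y' - 1 \mid y - 1$ as exactly those of the form $y' = \ell^m$ with $m \mid n$; the clause $y' \neq \ell$ removes the value $m = 1$. Applying part (a) to each such $y'$, the stated condition reduces to a purely arithmetic one: every divisor $m > 1$ of $n$ is divisible by $p$. Since $p$ is prime and $p \mid n$, this is easily seen to be equivalent to $n$ being a positive power of $p$, i.e., to $y \in \ppow(\ell)$.

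I expect the only delicate point to be the bookkeeping in part (a), where the regularity of $\ell - 1$ is used to cancel a factor of $\ell - 1$ and thus pass from an equation in $S$ to an integer divisibility statement via the $\CZ_S$ hypothesis. Part (b) should then follow almost formally, with the primality of $\Char(S)$ entering only at the final step to identify the integers all of whose nontrivial divisors are divisible by $p$.
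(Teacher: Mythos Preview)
Your proposal is correct and follows essentially the same route as the paper: both factor $\ell^{n}-1=(\ell-1)w_{n}(\ell)$, observe that $w_{n}(\ell)\equiv n\cdot 1_{S}\pmod{\ell-1}$, use the regularity of $\ell-1$ to cancel one factor, and then invoke the $\CZ_{S}$ hypothesis to turn $(\ell-1)\mid n\cdot 1_{S}$ into $n\cdot 1_{S}=0$; part~(b) is likewise dispatched in both by the number-theoretic fact that a positive multiple of a prime $p$ is a $p$-power precisely when all its divisors $>1$ are multiples of $p$.
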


\begin{proof} \leavevmode

\begin{enumerate}

\item Given $n \in \Zp$, let $w_{n} ( \ell ) = 1 + \ell + \dotsb + \ell^{ n - 1 }$, so that $\ell^{n} - 1 = ( \ell - 1 ) w_{n} ( \ell )$. Moreover, by observing that
\[
w_{n} ( \ell ) =
\begin{cases}
1_{S} ,                                                                        & \textnormal{if} \ n = 1 ; \\
n \cdot 1_{S} + ( \ell - 1 ) \sum_{ k = 0 }^{ n - 2 } ( n - 1 - k ) \ell^{k} , & \textnormal{otherwise} ,
\end{cases}
\]
we conclude that $\ell - 1 \mid w_{n} ( \ell ) - n \cdot 1_{S}$. These facts, together with the hypotheses on the element $\ell$, allow us to argue that
\[
( \ell - 1 )^{2} \mid \ell^{n} - 1 \iff \ell - 1 \mid w_{n} ( \ell ) \iff \ell - 1 \mid n \cdot 1_{S} \iff n\cdot 1_{S} = 0_{S} ,
\]
and to conclude, for any $y = \ell^{n} \in \pow ( \ell )$, that $( \ell - 1 )^{2} \mid y - 1$ if and only if $\Char (S) \mid n$ in $\Zp$.

\item Let $p = \Char (S)$. Since $p$ is a prime number, the positive powers of $p$ can be characterized as those positive multiples of $p$ whose nontrivial divisors are already multiples of $p$. These facts, together with the hypotheses on $\ell$ and the result of item~\subcref{lemma-MPOW-PPOW-a}, yield the claim.\qedhere

\end{enumerate}

\end{proof}

\begin{corollary} \label{corollary-PPOW-uniformly-definable}

The sets \( \ppow ( \ell ) \) are uniformly definable with parameter \( \ell \in L \) in the class of polynomial rings in one variable over a field of positive characteristic. In particular, such a formula is independent of the characteristic.

\end{corollary}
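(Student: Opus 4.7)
The plan is to verify, for every $\ell \in L$ inside a polynomial ring $S = R[x]$ with $R$ a field of positive characteristic, the three hypotheses of \Cref{lemma-MPOW-PPOW}, and then to invoke \Cref{lemma-MPOW-PPOW-b} to describe $\ppow(\ell)$. Since that lemma characterizes $\ppow(\ell)$ using only $\pow(\ell)$, ring operations and divisibility, and since by \Cref{proposition-L-uniformly-definable} both $L$ and $\pow(\ell) = \lpow(\ell)$ admit uniform first-order definitions independent of the characteristic, the resulting formula for $\ppow(\ell)$ will itself be characteristic-free, which settles the corollary.

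Two of the three hypotheses are essentially immediate. The ring $R[x]$ is a domain, and $\ell - 1$ has degree $1$ and is in particular nonzero; therefore it is regular. Moreover, $\CZ_{S}$ sits inside $R[x]$ as the prime subfield of $R$, and for degree reasons the only element of $\CZ_{S}$ that can be a multiple of the degree-one polynomial $\ell - 1$ is $0_{S}$, since a product $(\ell - 1) h$ with $h \neq 0$ has positive degree.

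The main point is the divisibility hypothesis $\ell^{m} - 1 \mid \ell^{n} - 1 \iff m \mid n$. Writing $\ell = ax + b$ with $a \in R^{*}$, the $R$-algebra endomorphism of $R[x]$ sending $x$ to $\ell$ has inverse $x \mapsto a^{-1}(x - b)$ and is therefore an automorphism; hence $\ell$ is itself an indeterminate over $R$, and the claim reduces to the classical identity $T^{m} - 1 \mid T^{n} - 1 \iff m \mid n$ in $R[T]$, which a ring automorphism respects. I would treat the nontrivial direction by euclidean division: writing $n = qm + r$ with $0 \leq r < m$, the decomposition $T^{n} - 1 = T^{r}(T^{qm} - 1) + (T^{r} - 1)$ together with the obvious fact $T^{m} - 1 \mid T^{qm} - 1$ forces $T^{m} - 1$ to divide $T^{r} - 1$, and a degree comparison then yields $r = 0$.

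With the three hypotheses in place and $\Char(R)$ necessarily prime, \Cref{lemma-MPOW-PPOW-b} expresses membership in $\ppow(\ell)$ as a boolean combination of the definable predicates $\pow(\ell)$, equality and divisibility, producing the sought uniform formula. I expect the change-of-variable reduction used to handle the divisibility hypothesis to be the only nonroutine ingredient; the remaining verifications slot directly into the lemma, and the characteristic-independence of the resulting formula is then automatic from that of the building blocks.
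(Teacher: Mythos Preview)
Your proposal is correct and follows essentially the same route as the paper: verify the three hypotheses of \Cref{lemma-MPOW-PPOW} for $\ell \in L$ (regularity by nonzeroness in a domain, the $\CZ_S$ condition by degree, and the divisibility criterion via the Euclidean decomposition $\ell^{n}-1 = \ell^{r}(\ell^{qm}-1) + (\ell^{r}-1)$ followed by a degree comparison), then read off a uniform formula from the lemma together with \Cref{proposition-L-uniformly-definable}. The only cosmetic difference is that you insert an automorphism to reduce to $\ell = T$, whereas the paper runs the same computation directly with $\ell$.
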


\begin{proof}

We claim that, for any field $R$ (even with $\Char(R) = 0$), all elements $\ell \in L \subseteq R[x]$ satisfy the hypotheses of~\Cref{lemma-MPOW-PPOW}. This, together with~\Cref{proposition-L-uniformly-definable}, implies the result.

Obviously $\ell - 1$ is regular, and since in this case we have $\CZ_{S} = \{ n \cdot 1_{S} \colon n \in \Z \} \subseteq R$ and $\ell - 1$ divides no nonzero constant polynomial, it follows that $0_{S}$ is the only multiple of $\ell - 1$ in $\CZ_{S}$. Finally, let $m , n \in \Zp$ be such that $\ell^{m} - 1 \mid \ell^{n} - 1$, and write $n = qm + r$, with $q \geq 0$ and $0 \leq r < m$. Since
\[
\ell^{m} - 1 \mid \ell^{n} - 1 = \ell^{r} ( \ell^{ qm } - 1 ) + \ell^{r} - 1
\]
and $\ell^{m} - 1 \mid \ell^{ qm } - 1$, it follows that $\ell^{m} - 1 \mid \ell^{r} - 1$. If $r$ were nonzero, then we would have $m = \deg ( \ell^{m} - 1 ) \leq \deg ( \ell^{r} - 1 ) = r$, an absurd. Therefore $r=0$, that is $m \mid n$.
\end{proof}

\noindent The next results concern the uniform interpretation of equality of exponents that we intend to build.

\begin{lemma} \label{lemma-ell_1^k=uell_2^k}

Let \( R \) be a field of characteristic \( p > 0 \), and suppose that \( k = p^{m} \), with \( m \geq 1 \). Given \( \ell_{1} , \ell_{2} \in L \subseteq R[x] \), there exist \( u \in R^{*} \) and \( \rho \in R \) such that \( \ell_{1}^{k} = u \ell_{2}^{k} + \rho \).

\end{lemma}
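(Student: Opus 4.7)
The plan is to exploit the Frobenius-type identity available in characteristic $p$: namely, for any $a, b$ in a commutative ring of characteristic $p$ and any positive power $k = p^{m}$, we have $(a+b)^{k} = a^{k} + b^{k}$. This is the only nontrivial ingredient; after invoking it, the rest is a direct identification of the constants $u$ and $\rho$.

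Concretely, I would write $\ell_{1} = a_{1} x + b_{1}$ and $\ell_{2} = a_{2} x + b_{2}$ with $a_{1}, a_{2} \in R^{*}$ and $b_{1}, b_{2} \in R$, which is possible since $\ell_{1}, \ell_{2} \in L$ are degree $1$ polynomials. Applying the Frobenius identity coefficientwise (using that $R[x]$ has characteristic $p$ whenever $R$ does), I obtain
\[
\ell_{1}^{k} = a_{1}^{k} x^{k} + b_{1}^{k} , \qquad \ell_{2}^{k} = a_{2}^{k} x^{k} + b_{2}^{k} .
\]

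Now set $u = a_{1}^{k} a_{2}^{-k} \in R^{*}$, which makes sense because $a_{2} \neq 0$ and $R$ is a field. Then
\[
\ell_{1}^{k} - u \ell_{2}^{k} = a_{1}^{k} x^{k} + b_{1}^{k} - a_{1}^{k} a_{2}^{-k} \bigl( a_{2}^{k} x^{k} + b_{2}^{k} \bigr) = b_{1}^{k} - a_{1}^{k} a_{2}^{-k} b_{2}^{k} \in R ,
\]
so defining $\rho$ to be this constant yields the claimed equality $\ell_{1}^{k} = u \ell_{2}^{k} + \rho$.

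There is no real obstacle here: the whole proof is the observation that raising to the $p^{m}$-th power is a ring homomorphism in characteristic $p$, which collapses the sum $(ax + b)^{k}$ to $a^{k} x^{k} + b^{k}$ and reduces the equation to matching the leading coefficients by choice of $u$ and absorbing the constant discrepancy into $\rho$.
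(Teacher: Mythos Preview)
Your proof is correct and uses essentially the same idea as the paper: the additivity of the $p^{m}$-th power map (Frobenius) in characteristic $p$. The only cosmetic difference is that the paper first writes $\ell_{1} = v\ell_{2} + s$ and then raises to the $k$th power, whereas you expand both $\ell_{i}^{k}$ in terms of $x^{k}$ and match coefficients; the resulting $u$ and $\rho$ agree.
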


\begin{proof}

We may write $\ell_{i} = a_{i} x + b_{i}$, with $a_{1} , a_{2} \neq 0$. By setting $s = b_{1} - \frac{ a_{1} b_{2} }{ a_{2} }$ and $v = \frac{ a_{1} }{ a_{2} }$, we get $\ell_{1} = v \ell_{2} + s$. Notice that raising elements of $R[x]$ to $k$th power is the $m$th iterate of the Frobenius endomorphism, hence an additive map. Therefore we may write $\ell_{1}^{k} = u \ell_{2}^{k} + \rho$, where $u = v^{k} \in R^{*}$ and $\rho = s^{k} \in R$.
\end{proof}

\begin{proposition} \label{proposition-interpretation-equality-char-p}

Let \( R \) be a field of characteristic \( p > 0 \) and let \( \ell_{1} , \ell_{2} \in L \subseteq R[x] \). Suppose that \( k_{1} , k_{2} \) are powers of \( p \) with positive exponents, and set \( y_{i} = \ell_{i}^{ k_{i} } \) \textup{(}so that \( y_{i} \in \ppow ( \ell_{i} ) \) for \( i = 1 , 2 \)\textup{)}. We have \( k_{1} = k_{2} \) if and only if there exist \( u \in R^{*} \) and \( \rho \in R \) such that \( y_{1} = u y_{2} + \rho \).

\end{proposition}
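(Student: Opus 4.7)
The plan is to verify the two implications separately; both are short once one has the previous lemma and observes that elements of $L$ have degree exactly one.

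For the forward direction, suppose $k_1 = k_2 = k$. Since $k$ is a positive power of $p$, by \Cref{lemma-ell_1^k=uell_2^k} applied to $\ell_1, \ell_2 \in L$, there exist $u \in R^*$ and $\rho \in R$ such that $\ell_1^k = u \ell_2^k + \rho$, i.e.~$y_1 = u y_2 + \rho$.

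For the reverse direction, assume $y_1 = u y_2 + \rho$ with $u \in R^*$ and $\rho \in R$. The plan is just a degree count: since $\ell_i \in L$ has degree $1$, the polynomial $y_i = \ell_i^{k_i}$ has degree exactly $k_i \geq 1$. The right-hand side $u y_2 + \rho$ has degree $\deg(y_2) = k_2$, because $u \neq 0$ and adding the constant $\rho$ cannot alter the degree of a polynomial of positive degree. Hence $k_1 = \deg(y_1) = k_2$.

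I do not foresee any real obstacle here; the only point to take care of is that the degree of $u y_2 + \rho$ is really $k_2$, which uses $u \in R^* \subseteq R[x]^*$ and $\deg(y_2) = k_2 \geq 1 > 0 = \deg(\rho)$ (with the usual convention $\deg(0) = -\infty$ if $\rho = 0$). This means the statement holds verbatim for $\ell_1, \ell_2 \in L$ without any further assumption on $R$ beyond $\Char(R) = p > 0$, which is consistent with the use of \Cref{lemma-ell_1^k=uell_2^k} for the non-trivial implication.
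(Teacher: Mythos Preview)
Your proof is correct and follows essentially the same approach as the paper: the forward direction is exactly the invocation of \Cref{lemma-ell_1^k=uell_2^k}, and the converse is the degree comparison, which the paper phrases as ``if two nonzero polynomials differ by a constant, then they must have the same degree, and multiplying by a unit does not alter the degree.'' Your version simply spells out the degree bookkeeping a bit more explicitly.
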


\begin{proof}

The ``only if'' part follows from~\Cref{lemma-ell_1^k=uell_2^k}. For the converse, just observe that, if two nonzero polynomials differ by a constant, then they must have the same degree, and that multiplying by a unit does not alter the degree.
\end{proof}

\noindent In what follows we interpret, in our rings of interest, the positive integers endowed with the usual order relation and the relation of divisibility. Incidentally, the formulas for interpretation of these two relations are similar.

\begin{theorem} \label{theorem-interpretation-R[x]-R-field-char-p}

There is a two-dimensional uniform interpretation \( \Gamma \) of \( ( \Zp , \leq , \mid \, ) \) in the class of polynomial rings in one variable over a field of positive characteristic. Consequently, arithmetic is uniformly interpretable in this class of rings.

\end{theorem}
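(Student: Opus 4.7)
The plan is to construct a two-dimensional interpretation whose coordinate map
\[
F_{ \Gamma } \colon ( \ell , y ) \longmapsto n \quad \text{where} \quad y = \ell^{ p^{n} } ,
\]
is defined on the pairs $( \ell , y ) \in R[x]^{2}$ with $\ell \in L$ and $y \in \ppow ( \ell )$; here $p = \Char (R)$, and the map is well-defined because all positive powers of $\ell \in L$ are distinct (as verified in the proof of \Cref{corollary-PPOW-uniformly-definable}), and obviously surjective onto $\Zp$. The domain formula $\partial_{ \Gamma }$ is just the conjunction of the uniform definitions of $L$ and $\ppow ( \ell )$ given by \Cref{proposition-L-uniformly-definable,corollary-PPOW-uniformly-definable}, so the resulting interpretation will automatically be uniform provided every remaining formula only involves $R$, $L$, $\ppow$, and the ring operations.

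Equality will then be handled directly by \Cref{proposition-interpretation-equality-char-p}: the formula $( \ell_{1} , y_{1} ) \InterpEq ( \ell_{2} , y_{2} )$ simply asserts existence of $u \in R^{*}$ and $\rho \in R$ with $y_{1} = u y_{2} + \rho$. For the order and divisibility relations, the key device is a ``change of base''. Combining \Cref{lemma-ell_1^k=uell_2^k} with \Cref{proposition-interpretation-equality-char-p}, given any $( \ell_{1} , y_{1} )$ in the domain and any $\ell_{2} \in L$, there is a \emph{unique} $y_{1}' \in \ppow ( \ell_{2} )$ representing the same integer, and it is characterized inside $R[x]$ by the existence of $u \in R^{*}$ and $\rho \in R$ with $y_{1} = u y_{1}' + \rho$. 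With both integers thus expressed over the common base $\ell_{2}$, I will declare $( \ell_{1} , y_{1} ) \InterpLeq ( \ell_{2} , y_{2} )$ iff there exists such a $y_{1}'$ with $y_{1}' \mid y_{2}$---correct since $\ell_{2}^{ p^{ n_{1} } } \mid \ell_{2}^{ p^{ n_{2} } }$ in $R[x]$ is equivalent to $p^{ n_{1} } \leq p^{ n_{2} }$, which is equivalent to $n_{1} \leq n_{2}$---and $( \ell_{1} , y_{1} ) \InterpDiv ( \ell_{2} , y_{2} )$ iff there exists such a $y_{1}'$ with $y_{1}' - \ell_{2} \mid y_{2} - \ell_{2}$.

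The correctness of the divisibility formula hinges on the identity
\[
\ell^{ p^{ n_{1} } } - \ell \ \bigm| \ \ell^{ p^{ n_{2} } } - \ell \text{ in } R[x] \iff n_{1} \mid n_{2} \, ,
\]
for every $\ell \in L$. The implication $\Leftarrow$ follows from the classical factorization of $x^{ p^{n} } - x$ in $\F_{p}[x]$ as the product of all monic irreducible polynomials of degree dividing $n$, which yields $x^{ p^{ n_{1} } } - x \mid x^{ p^{ n_{2} } } - x$ in $\F_{p}[x]$ (hence in $R[x]$) whenever $n_{1} \mid n_{2}$; this divisibility is then transported via the ring automorphism of $R[x]$ sending $x$ to $\ell$. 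For the implication $\Rightarrow$, the same automorphism reduces matters to the case $\ell = x$, after which uniqueness of polynomial long division lets the divisibility descend from $R[x]$ back to the subring $\F_{p}[x]$, whence $n_{1} \mid n_{2}$ follows. I expect this descent step to be the main technical delicacy, since divisibility is not in general preserved by ring embeddings in the reverse direction. Once the above facts are in place, the uniformity of every formula used is transparent, and the ``consequently'' clause of the theorem follows by composing $\Gamma$ with the interpretation of arithmetic in $( \Zp , \leq , \mid \, )$ provided by \Cref{proposition-auxiliary-structures}.
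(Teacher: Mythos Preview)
Your proposal is correct and follows essentially the same architecture as the paper: same domain formula, same coordinate map, same equality formula via \Cref{proposition-interpretation-equality-char-p}, and the same ``change of base'' device for comparing powers written over different $\ell$'s. The only differences are cosmetic. For $\leq$ you test $y_{1}' \mid y_{2}$ whereas the paper tests $y_{1} - 1 \mid z' - 1$ (with $z'$ playing the role of your $y_{1}'$ but rebased to $\ell_{1}$ rather than $\ell_{2}$); both reduce to $p^{m} \mid p^{n}$. For $\mid$ you test $y_{1}' - \ell_{2} \mid y_{2} - \ell_{2}$ while the paper tests $y_{1}/\ell_{1} - 1 \mid z'/\ell_{1} - 1$; after factoring out the common $\ell$ these are literally the same condition $\ell^{p^{m}-1} - 1 \mid \ell^{p^{n}-1} - 1$, which the paper handles directly via the already-established fact (proof of \Cref{corollary-PPOW-uniformly-definable}) that $\ell^{r} - 1 \mid \ell^{s} - 1 \iff r \mid s$ in $R[x]$, together with the elementary $p^{m} - 1 \mid p^{n} - 1 \iff m \mid n$. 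This route bypasses your detour through the factorization of $x^{p^{n}} - x$ over $\F_{p}$ and the descent from $R[x]$ to $\F_{p}[x]$, so the ``technical delicacy'' you anticipate never actually arises.
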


\begin{proof}\hspace{-1mm}\footnote{This is a corrected version of the proof of \cite{BCN-main}*{Theorem 6.27} (which hopefully will be corrected in a future version of the preprint), where it is claimed that certain modified formulas interpret sum and divisibility. If $p > 0$ denotes the characteristic of the ring, then these modified formulas actually interpret, respectively, the ternary relation $p^{i} + p^{j} = p^{k}$ instead of $i + j = k$, and $p^{i} \mid p^{j}$ instead of $i \mid j$; we indeed use this second modified formula in order to interpret $\leq$.}
\unpenalty\unskip\unpenalty 
The second claim follows from~\Cref{proposition-auxiliary-structures}, so we concentrate on the intended interpretation. We define the domain formula of $\Gamma$ as
\[
\partial_{ \Gamma } (\ell , y ) \colon \ \ \ell \in L \ \Land \ y \in \ppow ( \ell ) \, .
\]
By~\Cref{proposition-L-uniformly-definable} and~\Cref{corollary-PPOW-uniformly-definable} this formula defines, in any ring $S$ of the form $R[x]$, with $R$ a field of characteristic $p > 0$, the set
\[
\Delta_{ \Gamma } = \bigl\{ \bigl( \ell , \ell^{ p^{n} } \bigr) \colon \ell \in L , n \in \Zp \bigr\} \subseteq S \times S \, .
\]
The coordinate map $F_{ \Gamma } \colon \Delta_{ \Gamma } \to \Zp$ is given by $F_{ \Gamma } \bigl( \ell , \ell^{ p^{n} } \bigr) = n$; notice that $F_{ \Gamma }$ is well-defined because the powers of elements in $L$ are all distinct (because $\deg ( \ell^{j} ) = j$ for all $j \in \Zp$), and obviously $F_{ \Gamma }$ is surjective.

The interpretation of equality is given by the four-variable formula
\begin{equation} \label{formula-interpretation-equality-char-p}
( \ell_{1} , y_{1} ) \InterpEq ( \ell_{2} , y_{2} ) \colon \ \ \Exists u \, \Exists \rho \, \Lparen \, u \in S^{*} \ \Land \ \rho \in \{ 0 \} \cup S^{*} \ \Land \ y_{1} = u y_{2} + \rho \, \Rparen \, . \tag{$\maltese$}
\end{equation}
Since $S^{*} = R^{*} = R \smallsetminus \{ 0 \}$, equivalence between $\InterpEq$ and the equality of positive integers follows from~\Cref{proposition-interpretation-equality-char-p}.

Consider the four-variable formula
\begin{alignat*}{3}
( \ell_{1} , y_{1} ) \InterpLeq ( \ell_{2} , y_{2} ) & \colon \ \     &           \Exists z' \, \Lbrack & \, z' \in \ppow ( \ell_{1} ) \ \Land \ ( \ell_{1} , z' ) \InterpEq ( \ell_{2} , y_{2} ) \ \Land \ y_{1} - 1                          &     & \mid z' - 1 \, \Rbrack \, ,
\intertext{ and the following slight variant: }
( \ell_{1} , y_{1} ) \InterpDiv ( \ell_{2} , y_{2} ) & \colon \ \     &     \Exists z' \, \biggl\Lbrack & \, z' \in \ppow ( \ell_{1} ) \ \Land \ ( \ell_{1} , z' ) \InterpEq ( \ell_{2} , y_{2} ) \ \Land \ \frac{ y_{1} }{ \ell_{1} } - 1     &     & \Bigm| \frac{ z' }{ \ell_{1} } - 1 \, \biggr\Rbrack \, .
\end{alignat*}
In both cases the subformula $\InterpEq$ is that given by~\labelcref{formula-interpretation-equality-char-p}.

If $y_{1} , z' \in \ppow ( \ell_{1} )$, then in particular $y_{1}$ and $z'$ are multiple of $\ell_{1}$. Therefore the quotients $y_{1} / \ell_{1}$ and $z' / \ell_{1}$ are well-defined because $R[x]$ is an integral domain, which justifies the shorthand used in the formula for the relation $\InterpDiv$. Given $( \ell_{1} , y_{1} ) , ( \ell_{2} , y_{2} ) \in \Delta_{ \Gamma }$, we have $y_{1} = \ell_{1}^{ p^{m} }$ and $y_{2} = \ell_{2}^{ p^{n} }$ for some $m , n \in \Zp$. We want to prove that
\makeatletter
\setbool{@fleqn}{true}
\makeatother
\vspace{1mm}
\begin{alignat*}{3}
\hspace{\leftmarginiv} & \bullet \hspace{\labelsep}     &     ( \ell_{1} , y_{1} ) & \InterpLeq ( \ell_{2} , y_{2} )     &     & \iff m \leq n \, , \ \textnormal{and} \\[1mm]
\hspace{\leftmarginiv} & \bullet \hspace{\labelsep}     &     ( \ell_{1} , y_{1} ) & \InterpDiv ( \ell_{2} , y_{2} )     &     & \iff m \mid n \, . \\[1mm]
\intertext{Condition $z' \in \ppow ( \ell_{1} )$ in formulas $\InterpLeq$ and $\InterpDiv$ amounts to $z' = \ell_{1}^{ p^{k} }$, with $k \in \Zp$, and condition $( \ell_{1} , z' ) \InterpEq ( \ell_{2} , y_{2} )$ amounts to $k = n$. Therefore we have} \\[-5mm]
\hspace{\leftmarginiv} & \bullet \hspace{\labelsep}     &     ( \ell_{1} , y_{1} ) & \InterpLeq ( \ell_{2} , y_{2} )     &     & \iff y_{1} - 1 \mid z' - 1 \iff \ell_{1}^{ p^{m} } - 1 \bigm| \ell_{1}^{ p^{n} } - 1 \, , \ \textnormal{and} \\[1mm]
\hspace{\leftmarginiv} & \bullet \hspace{\labelsep}     &     ( \ell_{1} , y_{1} ) & \InterpDiv ( \ell_{2} , y_{2} )     &     & \iff \frac{ y_{1} }{ \ell_{1} } - 1 \Bigm| \frac{ z' }{ \ell_{1} } - 1 \iff \ell_{1}^{ p^{m} - 1 } - 1 \bigm| \ell_{1}^{ p^{n} - 1 } - 1 \, . \\[1mm]
\intertext{Recall that for any $r , s \in \Zp$ and any $\ell \in L$ we have that $\ell^{r} - 1 \mid \ell^{s} - 1$ if and only if $r \mid s$ (see the proof of~\Cref{corollary-PPOW-uniformly-definable}). Consequently, the equivalences above become} \\[-5mm]
\hspace{\leftmarginiv} & \bullet \hspace{\labelsep}     &     ( \ell_{1} , y_{1} ) & \InterpLeq ( \ell_{2} , y_{2} )     &     & \iff p^{m} \mid p^{n} \iff m \leq n \, , \ \textnormal{and} \\[1mm]
\hspace{\leftmarginiv} & \bullet \hspace{\labelsep}     &     ( \ell_{1} , y_{1} ) & \InterpDiv ( \ell_{2} , y_{2} )     &     & \iff p^{m} - 1 \mid p^{n} - 1 \, .
\end{alignat*}
\vspace{3mm}
\noindent Finally, it is easy to see that $p^{m} - 1 \mid p^{n} - 1$ if and only if $m \mid n$, for any positive integer $p > 1$ (not just $p$ a prime). This proves that the formulas $\InterpLeq$ and $\InterpDiv$ correctly interpret $\leq$ and $\mid$ in $\Zp$, respectively.
\end{proof}

\makeatletter
\setbool{@fleqn}{false}
\makeatother

\subsection{The case of coefficient fields containing elements of infinite multiplicative order} \label{subsection-interpretability-R[x]-R-field-infinite-multiplicative}

In this subsection we prove that univariate polynomial rings over certain fields satisfy a technical property, which allows to interpret arithmetic uniformly on these rings; actually, such an interpretation already works uniformly in another wide class of polynomial rings (\Cref{theorem-big-interpretation}).

\begin{definition}

Let $S$ be a ring, and let $\ell , \ell' \subseteq S$. We say that $\ell$ and $\ell'$ are \textbf{$\bm{1}$-connected} (and write $\ell \sim \ell'$) when, taking any $y = \ell^{m} \in \pow ( \ell )$ and $y' = ( \ell' )^{n} \in \pow ( \ell' )$ such that $\ell - \ell' \mid y - y'$, we must have $m = n$. We say that $\ell$ and $\ell'$ are \textbf{$\bm{k}$-connected} (and write $\ell \sim_{k} \ell'$) if there exist $l_{0} , \dotsc , l_{k} \in S$ with $l_{0} = \ell$ and $l_{k} = \ell'$, such that $l_{ i - 1 } \sim l_{i}$ for $i = 1 , \dotsc , k$. If in addition the elements $l_{1} , \dotsc , l_{ k - 1 }$ can be chosen from a subset $T$ of $S$, we say that $\ell$ and $\ell'$ are $k$-connected \textbf{through} $\bm{T}$.

\end{definition}
The following result characterizes $1$-connectivity in univariate polynomial rings over a field.

\begin{lemma} \label{lemma-connectivity-fields}

Let \( \ell , \ell' \in L \subseteq R[x] \), and write \( \ell' = r \ell - s \), with \( r \in R^{*} \) and \( s \in R \). We have \( \ell' \sim \ell \) precisely when \textup{(}exactly\textup{)} one of the following conditions hold\textup{:}

\begin{itemize}

\item \( r = 1 \) and \( s = 0 \) \textup{(}that is \( \ell' = \ell \)\textup{)}, or

\item \( r \neq 1, s \neq 0 \), and \( \frac{s}{ r - 1 } \) has infinite multiplicative order.

\end{itemize}
    
\end{lemma}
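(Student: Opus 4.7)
The plan is to rewrite $\ell - \ell' = (1 - r)\ell + s$ and analyze the divisibility condition $\ell - \ell' \mid \ell^{m} - (\ell')^{n}$ by splitting into four cases, according to whether $r = 1$ or not, and whether $s = 0$ or not; the two alternatives listed in the lemma correspond to the ``extreme'' configurations ($r = s = 0$ modulo the normalization, and ``everything generic''), while the remaining two cases must be shown to violate $1$-connectivity.

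For the degenerate cases, if $r = 1$ and $s = 0$ then $\ell' = \ell$ and $\ell - \ell' = 0$, so $0 \mid \ell^{m} - \ell^{n}$ forces $\ell^{m} = \ell^{n}$, whence $m = n$ by comparing degrees, and $\ell \sim \ell'$. If $r = 1$ and $s \neq 0$, then $\ell - \ell' = s \in R^{*}$ is a unit in $R[x]$, so the divisibility is automatic and the pair $(m, n) = (1, 2)$ already witnesses $\ell \not\sim \ell'$. If $r \neq 1$ and $s = 0$, then $\ell - \ell' = (1 - r)\ell$ is associate to $\ell$; again $(m, n) = (1, 2)$ works since $\ell$ trivially divides $\ell - (r\ell)^{2}$.

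The main case is $r \neq 1$ and $s \neq 0$. Here $\ell - \ell'$ is a genuine degree-one polynomial, whose leading coefficient $(1 - r) \cdot (\text{leading coef.\ of }\ell)$ is a unit, so it has a unique root $x_{0} \in R$. A quick calculation from $(1 - r)\ell(x_{0}) + s = 0$ gives $\ell(x_{0}) = s/(r - 1) =: \alpha \in R \setminus \{ 0 \}$, and substituting back yields $\ell'(x_{0}) = r\alpha - s = \alpha$ as well. Since the linear factor $\ell - \ell'$ is irreducible in $R[x]$, divisibility by it is equivalent to vanishing at $x_{0}$; applied to $f = \ell^{m} - (\ell')^{n}$, the $1$-connectivity condition reduces to the purely multiplicative statement that $\alpha^{m} = \alpha^{n}$ forces $m = n$ for all $m, n \in \Zp$. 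Since $\alpha$ is nonzero, this is exactly the assertion that $\alpha$ has infinite multiplicative order; conversely, if $\alpha$ has finite order $d \geq 1$, the pair $(m, n) = (1, d + 1)$ produces a counterexample.

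The only subtle step is the observation that $\ell$ and $\ell'$ collapse to the \emph{same} scalar $\alpha$ when evaluated at the unique root of $\ell - \ell'$; once this symmetry is in place, the reduction to the order-of-$\alpha$ dichotomy is immediate, and the remaining verifications are elementary.
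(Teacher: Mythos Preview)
Your argument is correct. The route differs from the paper's in a pleasant way: the paper first applies the $R$-automorphism of $R[x]$ sending $x\mapsto\ell$ to reduce to the special case $\ell=x$, and then uses the identity $(\ell')^{m}-x^{n}=\bigl[(\ell')^{m}-x^{m}\bigr]+\bigl[x^{m}-x^{n}\bigr]$ together with $\ell'-x\mid(\ell')^{m}-x^{m}$ to replace the mixed expression $(\ell')^{m}-x^{n}$ by the symmetric one $x^{m}-x^{n}$, before evaluating at the root $s/(r-1)$. You bypass both maneuvers by evaluating directly at the root $x_{0}$ of $\ell-\ell'$ and observing that $\ell(x_{0})=\ell'(x_{0})$; this last equality is in fact immediate from the definition of $x_{0}$ (no computation is needed, though the one you give is correct), and it does exactly the job of the paper's algebraic identity. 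Your approach trades the automorphism reduction for the factor theorem and is arguably more elementary; the paper's approach has the minor advantage that after normalization one is literally computing with powers of $x$.

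One cosmetic remark: in your opening plan you wrote ``$r=s=0$'' where you meant ``$r=1,\ s=0$''; the body of the argument has it right.
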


\begin{proof}

Since the connectivity relation is defined in terms of the ring operations, it follows that such relation is invariant under ring isomorphisms. Therefore, by considering the (unique) ring automorphism of $R[x]$ acting as the identity on $R$ and sending $x$ to $\ell$, we conclude that $\ell' = r \ell - s \sim \ell \iff rx - s \sim x$. Thus, in order to prove the result, it is sufficient to consider the case $\ell = x$, so that $\ell' = rx - s$.

Given $m , n \in \Zp$ we have $x^{m} - x^{n} = [ ( \ell' )^{m} - x^{n} ] - [ ( \ell' )^{m} - x^{m} ]$ and $\ell' - x \mid ( \ell' )^{m} - x^{m}$, hence
\[
\ell' - x \mid ( \ell' )^{m} - x^{n} \iff \ell' - x = ( r - 1 )x - s \mid x^{m} - x^{n} .
\]
If $r = 1$, then $\ell' - x = -s$, which divides every element of $R[x]$ unless $s = 0$, so in this case $\ell' \sim x$ precisely when $s = 0$. If $r \neq 1$, then
\[
\ell' - x \mid x^{m} - x^{n} \iff x - \frac{s}{ r - 1 } \Bigm| x^{m} - x^{n} \iff \Bigl( \frac{s}{ r - 1 } \Bigr)^{m} = \Bigl( \frac{s}{ r - 1 } \Bigr)^{n} .
\]
Consequently, $rx - s \sim x$ if and only if all the positive powers of $\frac{s}{ r - 1 }$ are distinct, and it is straightforward to show that this in turn results in $s \neq 0$ and $\frac{s}{ r - 1 }$ to have infinite multiplicative order.
\end{proof}

\noindent The previous result naturally motive us to characterize those fields having nonzero elements with infinite multiplicative order, that is, not being roots of unity. For $p \in \Zp$ a prime, let $\F_{p} = \Z / p \Z$, the finite field with $p$ elements. Notice that every field of characteristic $p$ contains a copy of $\F_{p}$.

\begin{proposition} \label{proposition-good-fields}

The field \( R \) has no nonzero element with infinite multiplicative order if and only if \( \Char (R) = p > 0 \) and the field extension \( R / \F_{p} \) is algebraic.

\end{proposition}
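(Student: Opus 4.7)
The plan is to prove each direction by a direct argument, without invoking any structure theory beyond elementary field theory.

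For the backward direction, assume $\Char(R) = p > 0$ and $R / \F_{p}$ is algebraic. Given any $a \in R^{*}$, the subring $\F_{p}[a]$ is a finite-dimensional $\F_{p}$-vector space (because $a$ is algebraic), and is a field (because $\F_{p}[a] = \F_{p}(a)$ for algebraic $a$). Hence $\F_{p}[a]$ is a finite field; its multiplicative group has order $q - 1$ where $q = |\F_{p}[a]|$, so $a^{q-1} = 1$ and $a$ has finite multiplicative order.

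For the forward direction, suppose every element of $R^{*}$ has finite multiplicative order. First, rule out $\Char(R) = 0$: the prime subring of $R$ is a copy of $\Z$, which extends inside $R$ to a copy of its field of fractions $\Q$; but $2 \in \Q \subseteq R$ satisfies no equation $2^{n} = 1$ with $n \geq 1$, contradicting the hypothesis. Hence $\Char(R) = p > 0$, and $R$ contains $\F_{p}$. Now for arbitrary $a \in R^{*}$, by hypothesis there exists $n \geq 1$ with $a^{n} = 1$, so $a$ is a root of the nonzero polynomial $x^{n} - 1 \in \F_{p}[x]$, showing that $a$ is algebraic over $\F_{p}$. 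Since $0$ is trivially algebraic over $\F_{p}$, the extension $R / \F_{p}$ is algebraic.

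There is no real obstacle here: both implications reduce to the elementary fact that the multiplicative group of a finite field is torsion of bounded exponent, together with the observation that an algebraic element of a field over $\F_{p}$ generates a finite subfield. The only place requiring care is verifying that characteristic zero forces the existence of an element of infinite order, which is handled by passing to the prime field $\Q$.
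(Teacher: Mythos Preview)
Your proof is correct and follows essentially the same approach as the paper's own proof: both directions are handled by the same elementary observations (the element $2$ in $\Q$ forces positive characteristic, roots of unity satisfy $T^{n}-1 \in \F_{p}[T]$, and $\F_{p}(a)$ is a finite field whenever $a$ is algebraic over $\F_{p}$). The only cosmetic differences are the order in which you treat the two implications and your explicit mention that $0$ is algebraic.
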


\begin{proof}

If every $a \in R^{*}$ is a root of unit, then $\Char (R) = p > 0$ (otherwise $R$ would contain a copy of $\Q$, and $2 \in \Q \subseteq R$ would have infinite multiplicative order). Since $a$ is a root of some polynomial of the form $T^{m} - 1 \in \F_{p} [T]$ (with $m \in \Zp$), we conclude that $a$ is algebraic over $\F_{p}$, hence $R / \F_{p}$ is algebraic. Conversely, if $\Char (R) = p > 0$ and $R / \F_{p}$ is algebraic, then for every $a \in R^{*}$ the field extension $\F_{p} (a) / \F_{p}$ is algebraic and finitely generated, hence a finite field extension, and consequently $\F_{p} (a)$ is a finite field; in particular $a$ (as every nonzero element in $\F_{p} (a)$) has finite multiplicative order.
\end{proof}

\noindent The following result constitutes an extension of~\cite{BCN-main}*{Theorem 6.18} to our rings of interest:

\begin{theorem} \label{theorem-4-connectivity-fields-infinite-order}

If \( R \) has elements with infinite multiplicative order, then all elements of \( L \subseteq R[x] \) are \( 2 \)-connected with the element \( x \) through \( L \), and consequently \( 4 \)-connected with each other through \( L \).
    
\end{theorem}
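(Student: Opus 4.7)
The plan is to reinterpret \Cref{lemma-connectivity-fields} geometrically. If $\ell, \ell' \in L$ are distinct with $\ell' = r\ell - s$ and $r \neq 1$, then setting $\xi := s/(r-1) \in R$ one has $\ell - \ell' = -(r-1)(\ell - \xi)$, so $\ell - \ell'$ and $\ell - \xi$ are associates in $R[x]$. Evaluating at the unique $x_0 \in R$ with $\ell(x_0) = \xi$ yields $\ell'(x_0) = r\xi - s = \xi$ as well; hence $\ell - \ell' \mid \ell^m - (\ell')^n$ is equivalent to $\xi^m = \xi^n$, and $\ell \sim \ell'$ amounts exactly to $\xi$ having infinite multiplicative order. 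Informally, each element of $L$ is an affine line, any two non-parallel lines ``meet'' at a unique value $\xi \in R$, and they are $1$-connected precisely when this meeting value has infinite order.

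Fix $\ell = ax + b \in L$. To exhibit $\ell'' \in L$ with $\ell \sim \ell''$ and $\ell'' \sim x$, I would arrange that $\ell''$ meets $\ell$ at some value $\xi_1$ and meets $x$ at some value $\xi_2$, with both $\xi_1, \xi_2 \in R$ of infinite multiplicative order. Fix $\alpha \in R$ of infinite multiplicative order (guaranteed by hypothesis), set $\xi_2 := \alpha$, and pick $\xi_1 := \alpha^k$ for some exponent $k \geq 2$ with $\xi_1 \neq \xi_2$ and $\xi_1 \neq a\xi_2 + b$; since the positive powers of $\alpha$ are pairwise distinct, each forbidden condition excludes at most one value of $k$, so some $k \in \{2,3\}$ works. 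Define $\ell''$ to be the unique polynomial of degree at most $1$ with $\ell''(\xi_2) = \xi_2$ and $\ell''\!\left((\xi_1 - b)/a\right) = \xi_1$. The inequality $\xi_1 \neq a\xi_2 + b$ guarantees that the two prescribed arguments $\xi_2$ and $(\xi_1 - b)/a$ are distinct, so $\ell''$ exists and is unique; the inequality $\xi_1 \neq \xi_2$ then forces $\ell''$ to be nonconstant, hence $\ell'' \in L$. By construction $\ell''$ meets $x$ at value $\xi_2$ and meets $\ell$ at value $\xi_1$, both of infinite multiplicative order, so \Cref{lemma-connectivity-fields} yields $\ell \sim \ell''$ and $\ell'' \sim x$.

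For the $4$-connectivity assertion, given $\ell, \ell' \in L$, apply the previous step to each to produce $l_1, l_2 \in L$ with $\ell \sim l_1 \sim x$ and $x \sim l_2 \sim \ell'$; concatenation yields $\ell \sim l_1 \sim x \sim l_2 \sim \ell'$, exhibiting $l_1, x, l_2 \in L$ as the three required intermediates. I do not expect a serious obstacle: the delicate point is verifying that the two interpolation conditions defining $\ell''$ produce an element of $L$, but this is taken care of by the two explicit inequalities on $\xi_1$ and $\xi_2$, each of which rules out at most one value of the exponent $k$.
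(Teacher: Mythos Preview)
Your argument is correct and takes a genuinely different, simpler route than the paper's. One small point to tidy up: when you invoke the meeting-value criterion (the second bullet of \Cref{lemma-connectivity-fields}) to conclude $\ell'' \sim x$ and $\ell'' \sim \ell$, you are implicitly assuming that $\ell''$ is non-parallel to each of $x$ and $\ell$, and your two inequalities on $\xi_1,\xi_2$ do not by themselves guarantee this. It is, however, harmless: if $\ell''$ were parallel to $x$ then $\ell'' - x$ would be a constant vanishing at $\xi_2$, forcing $\ell'' = x$, so $\ell'' \sim x$ trivially and $\ell'' = x$ still meets $\ell$ at the value $\xi_1$; the case of $\ell''$ parallel to $\ell$ is symmetric. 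So the edge cases close themselves.

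The paper's proof proceeds quite differently. Writing the intermediate as $\ell = rx - s$ and the given element as $\ell' = r'\ell - s'$, it reduces the problem to a finite system of polynomial inequations in $(r,s)$ over $R$, and then carries out a case analysis on the transcendence relations among $a$, $b$, and the prime subfield --- invoking along the way that number fields contain only finitely many roots of unity, and that finitely many nonzero polynomials over an infinite field always admit a common nonvanishing point. Your approach sidesteps all of this: by fixing a single element $\alpha$ of infinite order and forcing both meeting values to be powers of $\alpha$, the only remaining freedom is the exponent $k$, and a pigeonhole among $\{2,3\}$ settles it. This is shorter, requires no field-theoretic case split, and makes the geometric content (two non-parallel elements of $L$ are $1$-connected precisely when their common value has infinite multiplicative order) fully explicit.
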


\begin{proof}

Let $\ell' = ax - b \in L$, with $a \in R^{*}$ and $b \in R$. It is sufficient to find elements $r, r',s, s' \in R$ such that $\ell' = r' \ell - s'$ and $\ell = rx - s$, and satisfying the second set of conditions given by~\Cref{lemma-connectivity-fields}, namely:

\begin{itemize}

\item $r, r' \notin \{ 0 , 1 \}$;

\item $s, s' \neq 0$;

\item the elements $\frac{s}{ r - 1 }$ and $\frac{s'}{ r' - 1 }$ are not roots of unity.

\end{itemize}
In fact, in such a case we have $\ell \in L$, and the connections $\ell' \sim \ell$ and $\ell \sim x$ follow from~\Cref{lemma-connectivity-fields}.

Since $\ell' = ax - b = r'( rx - s ) - s'$, we get $a = rr'$ and $b = r's + s'$. Therefore $r' = ar^{-1}$ and $s' = b - ar^{-1}s$, and so the intended conditions on $r, r', s, s'$ amount to the existence of elements $r, s \in R$ satisfying:

\begin{itemize}

\item $r \neq 0 , 1 , a$;

\item $s, as - br \neq 0$;

\item the elements $\frac{s}{ r - 1 }$ and $\frac{ -rs' }{ -r( r' - 1 ) } = \frac{ as - br }{ r - a }$ are not roots of unity.
    
\end{itemize}
Let $k$ be the prime subfield of $R$, that is $k = \Q$ if $\Char (R) = 0$ and $k = \F_{p}$ if $\Char (R) = p > 0$. Let $K = k( a , b ) \subseteq R$. If $K$ has no nonzero element with infinite multiplicative order, then by~\Cref{proposition-good-fields} we have $\Char (K) = p > 0$ and the field extension $K / \F_{p} = K / k$ is algebraic. Since $R$ contains elements of infinite multiplicative order and $\Char (R) = p > 0$, it follows from~\Cref{proposition-good-fields} that the field extension $R / k$ cannot be algebraic, which implies that $R / K$ is necessarily a transcendental extension.
If $t \in R$ is any transcendental element over $K$, then we may take $r = t$ and $s = t^{2}$. In fact, by recalling that transcendental elements behave as indeterminates, and taking into account that $a , b \in K$ by definition, we conclude that $r = t \notin K$, hence $r \neq 0 , 1 , a$, that $s = t^{2} \neq 0$ and $as - br = at^{2} - bt \neq 0$, and that $\frac{s}{ r - 1 } = \frac{ t^{2} }{ t - 1 }$ and $\frac{ as - br }{ r - a } =  \frac{ at^{2} - bt }{t - a}$ behave as nonconstant rational functions in $t$ with coefficients in $K$, so in particular both have infinite multiplicative order.

It remains to consider the case in which $K = k( a , b )$ has elements with infinite multiplicative order. From basic field theory, we must consider the following three possibilities:

\begin{itemize}

\item The element $a$ is transcendental over the subfield $k(b)$.

\item The element $b$ is transcendental over the subfield $k(a)$.

\item The elements $a$ and $b$ are algebraic over $k$.

\end{itemize}
In the first case, by using that $a$ behaves as an indeterminate over $k(b)$, it is readily verified that the choices $r = s = a^{2}$ work (in this case $as - br = a^{3} - b a^{2}, \frac{s}{ r - 1 } = \frac{ a^{2} }{ a^{2} - 1 }$ and $\frac{ as - br }{ r - a } = \frac{ a^{2} - ba }{a - 1 }$). Similarly, if $b$ is transcendental over $k(a)$, then we may take $r = s = b$. Finally, if $a$ and $b$ are algebraic over $k$, then $K / k$ is algebraic, so by~\Cref{proposition-good-fields} we cannot have $\Char (R) > 0$, hence $k = \Q$, and since $K$ is finitely generated, it follows that $K$ is a number field (a finite extension of $\Q$).
Any number field contains only finitely many roots of unity\footnote{If $E$ is a number field and $u \in E$ is a root of unity, say a $n$th primitive root, then $u$ has degree $\varphi (n)$ over $\Q$, where $\varphi$ is Euler's totient function \cite{Hun80}*{Proposition V.8.3}. If $n = q_{1}^{ m_{1} } \dotsm q_{j}^{ m_{j} }$ is the prime factorization of $n$, then for $i = 1 , \dotsc , j$ we have
\[q_{i} - 1 , 2^{ m_{i} - 1 } \leq ( q_{i} - 1 ) q_{i}^{ m_{i} - 1 } \leq \varphi (n) = [ \Q(u) : \Q ] \leq [ E : \Q ] \, .
\]
Therefore $q_{i} \leq 1 + [E : \Q ]$ and $m_{i} \leq 1 + \log_{2} [ E : \Q ]$, and so the prime factors of $n$, as well as their multiplicities, are bounded above by a constant depending only on $[ E : \Q ]$. This shows that the value $n$ can vary only on a finite set, and each such value can contribute with at most $\varphi (n)$ (primitive) roots of unity.}\!\!. Consequently, the condition on the elements $\frac{s}{ r - 1 }$ and $\frac{ as - br }{ r - a }$ can be rewritten as the system of inequations $s - u\cdot ( r - 1 ) \neq 0$ and $as - br - u\cdot ( r - a ) \neq 0$, where $u$ ranges over the finite set of roots of unity in $K$. These, together with the remaining required conditions on $r$ and $s$, amount to a finite set of polynomial (in fact, linear) inequations in the two unknowns $r, s$ over the field $K$. Since $K$ is infinite, such a system has always a solution\footnotemark\!\!.
\footnotetext{If $f_{1} , \dotsc , f_{j}$ are nonconstant polynomials in $m$ indeterminates over a field $E$ and $f = f_{1} \dotsm f_{j}$, then for any $\bm{a} \in E^{m}$ we have $f_{i}( \bm{a} ) = 0$ for some $i \iff f( \bm{a} ) = 0$, and the latter condition cannot hold for every $\bm{a} \in E^{m}$ whenever $E$ is infinite~\cite{AK21}*{Exercise (2.43)}.}
\end{proof}

\noindent We end this subsection with an extension, to our rings of interest, of the following interpretability result:

\begin{proposition}[\cite{BCN-main}*{Theorem 6.20}] \label{proposition-interpretability-RIP-nonfield}

There is a two-dimensional uniform interpretation of the structure \( ( \Zp , + , \mid \, ) \) in the class of polynomial rings \textup{(}in any set of indeterminates\textup{)} whose coefficient rings are reduced indecomposable nonfields.
    
\end{proposition}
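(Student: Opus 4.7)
The plan is to build a two-dimensional interpretation $\Gamma$ of $(\Zp, +, \mid\,)$ in $S = R[\CX]$ using pairs $(\ell, y)$, where $\ell$ ranges over a uniformly definable class $T$ of ``base'' elements and $y$ over $\lpow(\ell)$, with the coordinate map sending $(\ell, \ell^{n})$ to $n$. The interpretations of $+$ and $\mid$ will come from the ring identities $\ell^{m} \cdot \ell^{n} = \ell^{m+n}$ and $\ell^{m} - 1 \mid \ell^{n} - 1 \iff m \mid n$, in parallel with the proof of \Cref{theorem-interpretation-R[x]-R-field-char-p}.

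The first step is to exhibit a formula $T(\ell)$ that, across every $S$ of the considered class, cuts out a set of prime regular elements $\ell$ whose positive powers are all distinct and satisfy $\pow(\ell) = \lpow(\ell)$. Since $R$ is not a field, it contains a nonzero noninvertible element $a$, and the idea is to consider elements of the shape ``a nonunit of $R$ times an indeterminate plus a unit'', analogous to the use of $L$ in the field case; the reduced indecomposable hypothesis on $R$ guarantees that such elements are regular and prime in $R[\CX]$, and \Cref{lemma-POW-subset-LPOW} together with an argument modelled on \Cref{proposition-L-uniformly-definable} should yield $\pow(\ell) = \lpow(\ell)$, giving a parametric definition of $\pow(\ell)$.

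The second step is to interpret equality of exponents. Because different bases $\ell_{1}, \ell_{2} \in T$ may represent positive integers via unrelated power sequences, one invokes the connectivity machinery: for $\ell \sim \ell'$, the condition $\ell - \ell' \mid \ell^{m} - (\ell')^{n}$ forces $m = n$, and $\InterpEq$ is defined as a chain of $k$ single-step connections through $T$ for a fixed $k$. The key input is a $k$-connectivity result asserting that every two elements of $T$ are $k$-connected through $T$ with $k$ independent of $S$, in the spirit of \Cref{theorem-4-connectivity-fields-infinite-order} but exploiting the reservoir of indeterminates in $\CX$ (or of nonunits in $R$) instead of the existence of elements of infinite multiplicative order. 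With equality settled, the formulas for $+$ and $\mid$ are written as in the field-case theorem, inserting $\InterpEq$ to reduce arbitrary pairs of bases to a common one.

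The main obstacle I foresee is this connectivity step: proving uniform $k$-connectivity through $T$ when the coefficient ring may be very poor in units (e.g., $R = \Z$), together with handling the edge case $\CX = \emptyset$ (so $S = R$). In the latter case, base elements must be sought inside $R$ itself, and one might need to enlarge $T$ by allowing $\ell$ of the form ``nonunit plus unit'' and then reverify primality, regularity, and enough connectivity for the interpretation to go through.
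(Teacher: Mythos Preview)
Your outline matches the paper's approach. The paper does not prove this proposition in full but cites it from \cite{BCN-main}*{Theorem 6.20} and then summarises the ingredients exactly as you list them: a uniformly definable set $W$ (your $T$) whose elements $\ell$ satisfy $\lpow(\ell)=\pow(\ell)$, have pairwise distinct positive powers, and obey $\ell^{m}-1 \mid \ell^{n}-1 \iff m \mid n$; equality of exponents is then handled via connectivity, with the paper recording that the constant is $k=4$ (any two elements of $W$ are $4$-connected through $W$ when the coefficient ring is a reduced indecomposable nonfield). Addition and divisibility are interpreted just as you propose.

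Two small corrections to the worries you flag. First, the edge case $\CX=\emptyset$ does not arise: ``polynomial ring'' in this paper always entails at least one indeterminate (see the discussion opening \Cref{section-R[X]-indecomposable}, where multivariate rings are rewritten as univariate over a larger base). Second, your proposed shape for $T$ (``a nonunit of $R$ times an indeterminate plus a unit'') cannot be used literally, because individual indeterminates are not first-order definable in the pure ring language; the actual formula for $W$ from \cite{BCN-main} is purely ring-theoretic and, according to the paper, specialises to the set $L$ of \Cref{proposition-L-uniformly-definable} when $S$ is univariate over a field. So the only substantive missing piece in your plan---the definition of $W$ and the proof of $4$-connectivity through $W$---is precisely what the paper defers to \cite{BCN-main}.
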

The construction of such interpretation depends only on the following facts: there is a formula in the language of rings that defines, in any ring $S$, a subset $W$. In the case $S$ is a reduced indecomposable polynomial ring, such a subset $W$ is nonempty, and every element $\ell \in W$ satisfies the properties

\begin{itemize}

\item $\lpow ( \ell ) = \pow ( \ell )$,

\item all the powers of $\ell$ are distinct, and

\item for any $m , n \in \Zp$, we have $\ell^{m} - 1 \mid \ell^{n} - 1$ if and only if $m \mid n$.

\end{itemize}
If in addition the coefficient ring of the polynomial ring $S$ is a (reduced indecomposable) nonfield, then any pair of elements in $W$ is $4$-connected through $W$.

If $S$ is a univariate polynomial over a field, then $W = L$ \cite{BCN-main}*{Proposition 6.13\texttt{c}}. \Cref{proposition-L-uniformly-definable}, together with the proof of~\Cref{corollary-PPOW-uniformly-definable}, shows that in this case all elements $\ell \in L = W$ satisfy the bulleted conditions above. If in addition the coefficient field has nonroots of unity, then the $4$-connectivity condition holds by~\Cref{theorem-4-connectivity-fields-infinite-order}. These facts, together with~\Cref{proposition-auxiliary-structures}, imply the following strenghtening of~\Cref{proposition-interpretability-RIP-nonfield}:

\begin{theorem} \label{theorem-big-interpretation}

Arithmetic is uniformly interpretable in the class of polynomial rings satisfying \textup{(}exactly\textup{)} one of the following conditions\textup{:}

\begin{itemize}

\item The coefficient ring is a reduced indecomposable ring that is not a field.

\item The polynomial ring is univariate over a field having elements of infinite multiplicative order.
    
\end{itemize}
 
\end{theorem}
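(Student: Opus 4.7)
My plan is to invoke Proposition \ref{proposition-interpretability-RIP-nonfield} as a black box, and then upgrade its hypothesis to cover the second bullet of the theorem by checking that the same interpretation scheme goes through when $W$ is replaced by $L$ in the univariate-over-field case with nonroots of unity. The whole argument is really a matter of verifying, for each structural ingredient used in the construction of the interpretation of $(\Zp, +, \mid\,)$ given by Proposition \ref{proposition-interpretability-RIP-nonfield}, that the corresponding ingredient remains valid for $R[x]$ with $R$ a field having elements of infinite multiplicative order.

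Concretely, the steps would be as follows. First I would recall from the paragraph preceding the theorem that Proposition \ref{proposition-interpretability-RIP-nonfield} rests on four ingredients: (i) a ring-language formula that defines a nonempty subset $W \subseteq S$ in each $S$ in the class considered; (ii) the identity $\lpow(\ell) = \pow(\ell)$ for each $\ell \in W$; (iii) distinctness of the positive powers of $\ell$ together with the equivalence $\ell^m - 1 \mid \ell^n - 1 \iff m \mid n$; and (iv) uniform $4$-connectivity of pairs in $W$ through $W$. Next, for the univariate case $S = R[x]$ I would take $W := L$. Ingredient (i) is furnished by \Cref{proposition-L-uniformly-definable}, which also establishes (ii); \Cref{proposition-L-uniformly-definable} plus the divisibility chain argument carried out inside the proof of \Cref{corollary-PPOW-uniformly-definable} yield (iii); and \Cref{theorem-4-connectivity-fields-infinite-order} — which is where the hypothesis that $R$ has elements of infinite multiplicative order enters — supplies (iv). The key point is that the interpretation scheme of Proposition \ref{proposition-interpretability-RIP-nonfield} is \emph{uniform} in any class of rings for which a definable $W$ with properties (i)–(iv) is exhibited, so the scheme applies verbatim to the enlarged class.

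Having produced a uniform two-dimensional interpretation of $(\Zp, +, \mid\,)$ in each of the two bulleted subclasses, I would then compose it with the interpretation of arithmetic in $(\Zp, +, \mid\,)$ given by \Cref{proposition-auxiliary-structures}. Since composition of interpretations preserves uniformity (cited to \cite{Hod93}*{p.~218} in the preliminaries), this yields a uniform interpretation of arithmetic across the union of the two classes, which is the statement of the theorem.

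I do not expect a genuine obstacle here: the statement is a consolidation of previously established pieces and the only nontrivial content — namely, the $2$-connectivity of every element of $L$ with $x$ through $L$ when $R$ has elements of infinite multiplicative order — has already been dispatched in \Cref{theorem-4-connectivity-fields-infinite-order}. The mildly delicate point is ensuring that the \emph{same} interpreting formulas work across both subclasses (so as to claim uniformity on their union rather than merely on each subclass separately); but this is already built into the black-box use of Proposition \ref{proposition-interpretability-RIP-nonfield}, whose formulas depend only on the generic data (i)–(iv) and not on the specific ring.
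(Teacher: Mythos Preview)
Your proposal is correct and follows essentially the same route as the paper: the paper likewise observes that the interpretation of \Cref{proposition-interpretability-RIP-nonfield} depends only on the abstract properties (i)--(iv) of the definable set $W$, notes that in the univariate-over-field case the same formula yields $W = L$ (citing \cite{BCN-main}*{Proposition 6.13\texttt{c}}), verifies (ii)--(iii) via \Cref{proposition-L-uniformly-definable} and the proof of \Cref{corollary-PPOW-uniformly-definable}, obtains (iv) from \Cref{theorem-4-connectivity-fields-infinite-order}, and concludes by \Cref{proposition-auxiliary-structures}. The one point worth tightening is your justification for uniformity across the \emph{union}: the interpreting formulas do depend on the formula defining $W$, so you should note (as the paper does) that the \emph{same} $W$-formula from \cite{BCN-main} already cuts out $L$ when $S$ is univariate over a field, rather than appealing to the separate $L$-formula of \Cref{proposition-L-uniformly-definable}.
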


\begin{remark}

The conclusion in~\Cref{theorem-4-connectivity-fields-infinite-order} actually characterizes fields having nonroots of unity: in fact, if $R$ is a field such that every pair of elements in $L \subseteq R[x]$ are $4$-connected through $L$, then there exists $l_{0} , \dotsc , l_{4} \in L$ such that $l_{0} = x, l_{4} = x + 1$ and $l_{ i - 1 } \sim l_{i}$ for $i = 1 , \dotsc , 4$. In particular, for some $i$ we have $l_{ i - 1 } \neq l_{i}$, and so~\Cref{theorem-4-connectivity-fields-infinite-order} implies that $l_{i} = r l_{ i - 1 } - s$, with $r , s \in R^{*}$ such that $r \neq 1$ and the element $\frac{s}{ r - 1 } \in R^{*}$ has infinite multiplicative order.

\end{remark}

\section{Interpretability of arithmetic and uniform undecidability in the case of indecomposable polynomial rings} \label{section-R[X]-indecomposable}

\noindent In this section we first consider the class $\CF$ of reduced indecomposable polynomial rings, together with the following three subclasses:

\begin{itemize}

\item $\CF_{1}$, consisted of rings of the form $B[ \CX ]$, with $B$ a reduced indecomposable ring that is not a field and $\CX$ a set of indeterminates over $B$;

\item $\CF_{2}$, consisted of polynomial rings (in any set of indeterminates) over a reduced indecomposable ring of characteristic zero, and

\item $\CF_{3}$, consisted of polynomial rings \emph{in one variable} over a field of positive characteristic.

\end{itemize}
If $S = B[ \CX ] \in \CF$ is a polynomial ring in more than one indeterminate and $x \in \CX$, then we may rewrite $S = R[x]$, where $R = B[ \CX \smallsetminus \{ x \} ]$ is reduced and indecomposable. Moreover, $R$ is a \emph{polynomial} ring, hence a nonfield, and $x$ is an indeterminate over $R$. Consequently we have $S \in \CF_{1}$.

The previous argument shows that $\CF = \CF_{1} \cup \CF_{2} \cup \CF_{3}$, that the subclasses $\CF_{1}$ and $\CF_{2}$ overlap, and explains why we may define $\CF_{3}$ in terms of \emph{univariate} polynomials only. Notice that we also have $\CF = \CF_{1} \cup \CF_{2}' \cup \CF_{3}$, where $\CF_{2}'$ is the subclass of $\CF_{2}$ of polynomial rings (in any set of indeterminates) over a field of characteristic zero.

First, we state results about interpretations of arithmetic that are uniform on the subclasses of $\CF$ discussed above. In addition, we also state a result about uniform translation of sentences about arithmetic to sentences about reduced indecomposable polynomial rings (see~\Cref{definition-translation}), which provides a uniform proof of undecidability for all such rings; notice that the interpretability results just mentioned provide only partial uniform undecidability proofs in the respective subclasses covered. Finally, we make use of further algebraic properties of indecomposable/polynomial rings, in order to drop the reducedness hypothesis in these results.

\subsection{The results for reduced indecomposable polynomial rings}

\noindent First, we state a uniform definability result for the subclass $\CF_{2}'$.
\vspace{2mm}
\begin{theorem}

There is a formula \( \gamma \) that defines, in any polynomial ring \( S \) over a field of characteristic zero, the subset \( \varmathbb{N} \) of natural numbers, namely,
\vspace{2mm}
\begin{align*}
\gamma (t) \colon \ \Exists f \, \Exists g \, \bigl\Lbrack \, & f \notin \{ 0 \} \cup S^{*} \ \Land \ g \neq 0 \ \Land \ f \mid g \\
                                                              & \negmedspace \Land \ \Forall h \, \bigl\Lparen \, \Lbrack \, h \in \{ 0 \} \cup S^{*} \ \Land \ ( f + h ) \mid g \, \Rbrack \ {\To} \ \Lbrack \, ( f + h + 1 ) \mid g \ \Lor \ h = t \, \Rbrack \, \bigr\Rparen \, \bigr\Rbrack \, . 
\end{align*}
\vspace{2mm}
\noindent Consequently, arithmetic is uniformly interpretable in this class of rings.

\end{theorem}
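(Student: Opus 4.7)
The plan is to verify the biconditional $\gamma (t) \iff t \in \varmathbb{N}$, where $\varmathbb{N}$ is identified with its image in $S$ under the canonical embedding $\varmathbb{N} \hookrightarrow \Z \hookrightarrow R \hookrightarrow S$, which is injective because $\Char (R) = 0$. The design of $\gamma$ is to encode the existence of a nonconstant $f$ and a nonzero multiple $g$ of $f$ such that the set of shifts $h \in R$ satisfying $(f + h) \mid g$ is closed under $h \mapsto h + 1$ \emph{except} at one ``terminal'' element, which is forced to be $t$. Since $R$ is a field, $S^{*} = R^{*}$ and hence $\{ 0 \} \cup S^{*} = R$, so the set over which $h$ effectively ranges is just $R$.

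For the forward implication, given $t \in \varmathbb{N}$, pick any indeterminate $x \in \CX$ and set $f = x$ and $g = x (x + 1) (x + 2) \dotsm (x + t) \in S$. Then $f \notin R$, $g \neq 0$ and $f \mid g$. For $h \in R$, the linear polynomial $x + h$ divides $g$ if and only if $-h$ is a root of $g$, i.e., $h \in \{ 0, 1, \dotsc, t \}$ (viewed in $R$ via the embedding above). The universal clause is then immediate: for $h \in \{ 0, \dotsc, t - 1 \}$ one has $(x + h + 1) \mid g$, while for $h = t$ the disjunct $h = t$ holds; for every other $h \in R$ the hypothesis $(x + h) \mid g$ fails and the implication is vacuous.

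Conversely, suppose $\gamma (t)$ is witnessed by $f , g \in S$, so $f \in S \setminus R$ has positive total degree. The crucial observation is that for distinct integers $m , n$, the difference $(f + m) - (f + n) = m - n$ is a nonzero element of $R$ (by $\Char (R) = 0$), hence a unit in $S$; so $\gcd (f + m, f + n)$ is a unit, and $f + m$ and $f + n$ are coprime in the UFD $S$. Therefore the set $H = \{ h \in R : (f + h) \mid g \}$ is finite: its distinct elements yield pairwise coprime divisors of $g$, forcing $|H| \cdot \deg (f) \leq \deg (g)$. The universal clause of $\gamma (t)$ says every $h \in H$ satisfies $h + 1 \in H$ or $h = t$; if $t \notin R$, then the second disjunct is never available and the chain $0 \in H , 1 \in H , 2 \in H , \dotsc$ would force $\varmathbb{N} \subseteq H$, contradicting $|H| < \infty$. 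Hence $t \in R$, and, since $0 \in H$ but $\varmathbb{N} \not\subseteq H$ by finiteness, there is a smallest $m \in \varmathbb{N}$ with $m \notin H$. Then $k = m - 1 \in \varmathbb{N}$ lies in $H$ with $k + 1 \notin H$, and the formula forces $t = k \in \varmathbb{N}$.

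The main obstacle is precisely this coprimality--finiteness step, which leans on $\Char (R) = 0$ in two essential ways: to make nonzero integer differences into units of $S$, and to ensure the chain $0, 1, 2, \dotsc$ produces distinct shifts in $R$. Both ingredients break down in positive characteristic $p$: taking $f = x$ and $g = x^{p} - x = \prod_{a \in \F_{p}} (x - a)$ gives $H = \F_{p}$, which is closed under $h \mapsto h + 1$, so the universal clause is vacuously satisfied for every value of $t$ and $\gamma$ defines a strictly larger set than $\varmathbb{N}$.
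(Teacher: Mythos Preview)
Your argument is correct and is precisely the classical verification the paper omits: the paper gives no proof beyond calling it ``straightforward'' and citing \noun{Raphael Robinson}'s original \cite{RobR51}*{pp.~141,142}, and your write-up supplies exactly those details (the witness $f=x,\ g=x(x+1)\dotsm(x+t)$ for the forward direction, and the coprimality/degree-bound finiteness step exploiting $\Char(R)=0$ for the converse). One cosmetic remark: you phrase the coprimality observation for ``distinct integers $m,n$'' but then assert finiteness of all of $H\subseteq R$; the argument of course applies verbatim to any distinct $h,h'\in R$ (their difference is still a unit), and in any case only finiteness of $H\cap\varmathbb{N}$ is needed downstream.
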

\vspace{3mm}
\noindent The proof is straightforward; see \cite{RobR51}*{pp.~141,142}. See also the far more general result \cite{BCN20}*{Theorem 6.11}, which provides a formula that defines, in any reduced indecomposable polynomial ring $S$, the set of elements $n \cdot 1_{S}$ with $n \in \Zp$. Consequently $\varmathbb{N}$ is uniformly definable in the larger subclass $\CF_{2}$.

On the other hand, it is clear that even this stronger result does not imply interpretability of arithmetic in the case of positive characteristic, because in such a case the definable subset obtained is that of the integers modulo the characteristic. Fortunately,~\Cref{proposition-auxiliary-structures} together with~\Cref{theorem-interpretation-R[x]-R-field-char-p} deals with the uniform interpretation of arithmetic for the subclass $\CF_{3}$; the subclass $\CF_{1}$ is addressed, similarly, by using~\Cref{proposition-auxiliary-structures} and~\Cref{proposition-interpretability-RIP-nonfield}\footnotemark\!\!. Putting together these results, we obtain:

\footnotetext{Notice that this interpretation actually works uniformly on the subclass $\CF_{1} \cup \CF_{4}$, where $\CF_{4}$ is the subclass of univariate polynomial rings over a field having elements of infinite multiplicative order (\Cref{theorem-big-interpretation}); such subclass clearly satisfies $\CF_{4} \subseteq \CF_{2} \cup \CF_{3}$, and overlaps with both $\CF_{2}$ and $\CF_{3}$ (\Cref{proposition-good-fields}).}

\begin{theorem} \label{theorem-interpretability-R[X]-reduced-indecomposable}

Arithmetic is interpretable in reduced indecomposable polynomial rings \textup{(}uniformly in certain subclasses whose union covers the entire class\textup{)}.

\end{theorem}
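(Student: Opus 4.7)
The plan is simply to assemble the three subclass results already established (or cited) and invoke the coverage $\CF = \CF_{1} \cup \CF_{2} \cup \CF_{3}$ noted just above the statement. The theorem is not claiming a single uniform interpretation across $\CF$, only that arithmetic is interpretable in each member of $\CF$ and that the interpretation is uniform on each of the three covering subclasses. So the proof amounts to listing, for each subclass, which earlier ingredient supplies the uniform interpretation of arithmetic.

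Concretely, I would organize the proof as three short paragraphs. For $\CF_{3}$, \Cref{theorem-interpretation-R[x]-R-field-char-p} provides a two-dimensional uniform interpretation of $( \Zp , \leq , \mid \, )$ in this subclass; composing with the interpretation of arithmetic in $( \Zp , \leq , \mid \, )$ from \Cref{proposition-auxiliary-structures} (composition of interpretations being available by \cite{Hod93}*{p.~218}) gives a uniform interpretation of arithmetic in $\CF_{3}$. For $\CF_{1}$, the same strategy works with \Cref{proposition-interpretability-RIP-nonfield} in place of \Cref{theorem-interpretation-R[x]-R-field-char-p}, composing the resulting uniform interpretation of $( \Zp , + , \mid \, )$ with the corresponding item of \Cref{proposition-auxiliary-structures}. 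For $\CF_{2}$, I would invoke the cited \cite{BCN20}*{Theorem 6.11}, which uniformly defines $\{ n \cdot 1_{S} : n \in \Zp \}$ in any reduced indecomposable polynomial ring $S$; in characteristic zero this subset, equipped with the ambient $+$ and $\times$, is isomorphic to $( \Zp , + , \times )$, hence provides a uniform one-dimensional interpretation of $( \Zp , + , \times )$ in $\CF_{2}$, and one more composition (construction of $\Z$ from $\Zp$) yields arithmetic.

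I expect no real obstacle: all the serious work—definability of $L$, of $\ppow ( \ell )$, the $4$-connectivity machinery, the $\gamma$-type formulas, and the positive-powers construction of \cite{BCN20}*{Theorem 6.11}—has already been carried out earlier in the paper or in the cited references. The only items that deserve an explicit sentence are (i) that $\CF = \CF_{1} \cup \CF_{2} \cup \CF_{3}$, which was verified in the paragraphs preceding the statement by rewriting any multivariate $B[ \CX ]$ as $R[x]$ with $R = B[ \CX \smallsetminus \{ x \} ]$ a reduced indecomposable nonfield; and (ii) that composition of uniform interpretations is uniform, so the interpretations of arithmetic obtained in each subclass remain uniform on that subclass. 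The footnote observation that $\CF_{1} \cup \CF_{4}$ is also handled by \Cref{theorem-big-interpretation} is worth mentioning as a strengthening, but is not needed to close the proof.
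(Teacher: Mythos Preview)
Your proposal is correct and follows essentially the same route as the paper: the paper also derives the theorem by assembling the three subclass results (\Cref{proposition-interpretability-RIP-nonfield} for $\CF_{1}$, the $\gamma$/\cite{BCN20}*{Theorem 6.11} definability for $\CF_{2}$, and \Cref{theorem-interpretation-R[x]-R-field-char-p} for $\CF_{3}$), composing each with \Cref{proposition-auxiliary-structures}, and invoking the coverage $\CF = \CF_{1} \cup \CF_{2} \cup \CF_{3}$ established just before the statement. Your additional remarks on uniformity of compositions and the $\CF_{4}$ strengthening match the paper's own footnote.
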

\vspace{3mm}
\noindent Since these interpretations are not uniform, they do not provide a uniform translation of sentences about arithmetic to sentences about the members of the class $\CF$ of reduced indecomposable polynomial rings. However, such a uniform translation exists, by invoking~\Cref{proposition-auxiliary-structures} together with the following result, which constitutes a nontrivial extension of the scope of a method due to \noun{Raphael Robinson} \cite{RobR51}*{\S\S 4b,4c}:

\begin{theorem}[\cite{BCN-main}*{Theorem 6.1}] \label{theorem-undecidability-R[X]-reduced-indecomposable}

Let \( \CL = ( + , \times ) \) be the standard signature of rings. There is a uniform translation of sentences about the structure \( ( \Zp , + , \mid \, ) \) to \( \CL \)-sentences about the members in \( \CF \).

\end{theorem}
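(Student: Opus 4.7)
The approach is to assemble the uniform translation by combining the two uniform interpretations already available, whose domains together exhaust $\CF$. On one hand, \Cref{proposition-interpretability-RIP-nonfield} — together with the extension remarked in the footnote via \Cref{theorem-4-connectivity-fields-infinite-order} — provides a uniform two-dimensional interpretation $\Gamma_{A}$ of $( \Zp , + , \mid \, )$ on the subclass $\CF_{1} \cup \CF_{4}$. On the other hand, composing \Cref{theorem-interpretation-R[x]-R-field-char-p} with the interpretation of arithmetic supplied by \Cref{proposition-auxiliary-structures} yields a uniform interpretation $\Gamma_{B}$ of $( \Zp , + , \mid \, )$ on $\CF_{3}$. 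Since every field of characteristic zero contains a copy of $\Q$ and hence an element of infinite multiplicative order (\Cref{proposition-good-fields}), the subclass $\CF_{2}$ is contained in $\CF_{1} \cup \CF_{4}$, so $\CF = \CF_{1} \cup \CF_{2} \cup \CF_{3} \subseteq ( \CF_{1} \cup \CF_{4} ) \cup \CF_{3}$.

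Let $\widehat{ \varphi }^{A}$ and $\widehat{ \varphi }^{B}$ denote the translations of a sentence $\varphi$ about $( \Zp , + , \mid \, )$ induced by $\Gamma_{A}$ and $\Gamma_{B}$ respectively (\Cref{definition-induced-translation}). I propose as the uniform translation the Boolean combination
\[
\widehat{ \varphi } \coloneqq \bigl( \chi \wedge \widehat{ \varphi }^{A} \bigr) \vee \bigl( \neg \chi \wedge \widehat{ \varphi }^{B} \bigr) \, ,
\]
where $\chi$ is a fixed $\CL$-sentence satisfying $S \models \chi \iff S \in \CF_{1} \cup \CF_{4}$ for every $S \in \CF$. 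Given such a $\chi$, the verification is immediate: if $S \models \chi$ then $S \in \CF_{1} \cup \CF_{4}$ and the correctness of $\Gamma_{A}$ on that subclass gives $S \models \widehat{ \varphi } \iff S \models \widehat{ \varphi }^{A} \iff ( \Zp , + , \mid \, ) \models \varphi$; if instead $S \not\models \chi$, then $S \in \CF \setminus ( \CF_{1} \cup \CF_{4} ) \subseteq \CF_{3}$, and the symmetric argument using $\Gamma_{B}$ applies.

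The delicate step — and the main obstacle — is producing the distinguishing $\CL$-sentence $\chi$. Straightforward phrasings such as ``$\Char (S) > 0$'' or ``every nonzero element of the coefficient ring is a root of unity'' are not first-order expressible, by compactness, so $\chi$ must be extracted from finer first-order invariants that discriminate $\CF_{1} \cup \CF_{4}$ from $\CF_{3} \setminus \CF_{4}$ inside $\CF$. Two natural ingredients suggest themselves: first, the condition that the definable set $\{ 0 \} \cup S^{*}$ of ``constants'' (which equals the coefficient ring $R$ precisely when $R$ is a field, since $S^{*} = R^{*}$ in any reduced indecomposable polynomial ring) fails to be closed under addition, thereby detecting membership in $\CF_{1}$; and second, the existence of a $1$-connectivity witness inside the definable parameter set used by $\Gamma_{A}$, which by \Cref{lemma-connectivity-fields} amounts to the presence in $R$ of an element of infinite multiplicative order and hence to membership in $\CF_{4}$. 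The hard part is packaging these features into a single first-order sentence $\chi$ that cleanly singles out $\CF_{1} \cup \CF_{4}$ within $\CF$, and checking that the resulting Boolean combination produces the correct truth value on every ring of $\CF$, including on the overlaps $\CF_{3} \cap \CF_{4}$ where both interpretations are simultaneously in effect.
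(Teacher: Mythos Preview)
Your proposal leaves a genuine gap, and it also departs from the route the paper indicates. The paper does not prove this theorem here: it imports it from \cite{BCN-main}*{Theorem 6.1} and describes that argument as a ``nontrivial extension of the scope of a method due to \noun{Raphael Robinson}'' \cite{RobR51}*{\S\S 4b,4c}. Robinson's method does not split on a case-distinguishing sentence; it produces a single translation scheme valid on every member of $\CF$ at once. Indeed, the paper explicitly remarks, just before stating the theorem, that the available interpretations ``do not provide a uniform translation'' --- which is precisely the hurdle you are trying to clear by a Boolean case-split.

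The construction of $\chi$ is where your gap lies, and one of your two ingredients is actually wrong. You propose detecting $\CF_{1}$ by failure of $\{ 0 \} \cup S^{*}$ to be closed under addition, but for $S = k[t,x]$ (any field $k$) one has $S \in \CF_{1}$ while $\{ 0 \} \cup S^{*} = k$, which \emph{is} closed under addition; so this criterion does not capture $\CF_{1}$. Your second ingredient --- existence of a nontrivial $1$-connectivity witness in $W$ --- is more promising: one can check that $\ell \sim \ell'$ is first-order expressible (it amounts to uniqueness of the $y' \in \lpow(\ell')$ with $\ell - \ell' \mid y - y'$ for each $y \in \lpow(\ell)$), and \Cref{lemma-connectivity-fields} then separates $\CF_{4}$ from $\CF_{3} \smallsetminus \CF_{4}$. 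But to cover all of $\CF_{1}$ you would still need to show, from the construction of $W$ in \cite{BCN-main}, that every such $S$ contains a \emph{distinct} $1$-connected pair in $W$; the $4$-connectivity property quoted before \Cref{theorem-big-interpretation} is vacuous when $\lvert W \rvert = 1$, and nothing in this paper rules that out. As written, the load-bearing piece of your architecture is missing.
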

 
\subsection{Achieving (uniform) interpretability of arithmetic and/or undecidability through factor rings} \label{subsection-from-S/I-to-S}

\noindent Let $S$ be a ring, and let $\theta$ be a one-variable formula in the language of rings. If $\theta$ defines an ideal $I$ in $S$, then there is a canonical one-dimensional interpretation $\Sigma = \Sigma_{ S , \theta }$ of the factor ring $S / I$ in the ring $S$ (see~\Cref{definition-interpretation}): the domain formula is given by the tautology $\partial_{ \Sigma }(t) \colon t = t$, so that $\Delta_{ \Sigma } = S$. The coordinate map $F_{ \Sigma } \colon S \to S / I$ is the natural projection, that is $F_{ \Sigma }(s) = s + I$, for each $s \in S$. Finally, the formula for interpretation of equality is given by
\vspace{2mm}
\begin{alignat*}{2}
           b =_{ \Sigma } c\ & \colon \ \ \theta ( b - c ) \quad                       &     & ( \equiv b - c \in I ) \, , \\[1mm]
\shortintertext{ and the formulas for interpretation of sum and product are given by } \\[-5mm]
      [ b + c = d ]_{\Sigma} & \colon \ \ \theta \bigl( d - ( b + c ) \bigr) \quad     &     & ( \equiv d - (b + c) \in I ) \, ; \\[1mm]
[ b \cdot c = d ]_{ \Sigma } & \colon \ \ \theta ( d - bc ) \quad                      &     & ( \equiv d - bc \in I ) \, .
\end{alignat*}
\vspace{1mm}
\noindent Let $\CH$ be a class of rings, and suppose that $\theta$ defines, in each ring $S \in \CH$, an ideal $I_{S}$. Since the formulas above depend only on $\theta$ and not on $S \in \CH$, the same happens to each one of the (formulas of the) interpretations $\Sigma_{ S , \theta }$, with $S \in \CH$. Consequently, all the induced translations $\stixwidehat{ \Sigma_{ S , \theta } }$ of sentences (see~\Cref{definition-induced-translation}) from the factor rings $S / I_{S}$ to sentences about the corresponding rings $S$ also coincide. We denote the coincident interpretations $\Sigma_{ S , \theta }$ by $\Sigma_{ \theta }$, and the coincident translations $\stixwidehat{ \Sigma_{ S , \theta } }$ by $\stixwidehat{ \Sigma_{ \theta } }$.

Let $\CF$ be the class of factor rings $S / I_{S}$, with $S \in \CH$. If a uniform interpretation $\Gamma$ of arithmetic in $\CF$ is given, then by composing $\Gamma$ with the coincident interpretation $\Sigma_{ \theta }$ above we obtain a uniform interpretation of arithmetic in $\CH$.

Similarly, a given uniform translation of sentences about arithmetic to sentences about members of $\CF$ can be composed with the coincident translations $\stixwidehat{ \Sigma_{ \theta } }$ (of sentences about the factor rings $S / I_{S}$ to sentences about the corresponding rings $S$), providing in this way a uniform translation of sentences about arithmetic to sentences about the members of $\CH$ (and consequently a uniform proof of undecidability for all members of $\CH$).

\subsection{Removing the reducedness hypothesis}

\noindent We apply the results of~\Cref{subsection-from-S/I-to-S} in the following context: given a ring $S$, let $N_{S}$ be the \textbf{nilradical} of $S$, that is, the set of nilpotent elements in $S$. It is well known that $N_{S}$ is actually an ideal, and clearly the factor ring $S / N_{S}$ is reduced. We prove below (\Cref{lemma-basic-nilradical-c} and~\Cref{proposition-nilradical-polynomial-a}) that $S$ being indecomposable (resp.~polynomial) implies that $S / N_{S}$ is indecomposable (resp.~polynomial), and that $N_{S}$ is uniformly definable in the whole class of polynomial rings (\Cref{proposition-nilradical-polynomial-c}). We point out that the nilradical of a ring is not always first-order definable; see~\cite{Hod93}*{Exercise 8.5.1} for an example\footnotemark\!\!.

\footnotetext{One might expect that, given the infinitary definition of the nilradical, there would be plenty of examples of rings lacking a definable nilradical. Surprisingly, this is not the case, and the construction of the cited textbook counterexample seems to involve some form of choice, so perhaps it cannot be carried out directly in \textsf{ZF}.}

Thus, if $\CH$ is the class of indecomposable polynomial rings, then there is a formula $\theta$ that defines $N_{S}$ for each $S \in \CH$, and the class $\CF$ of factor rings $S / N_{S}$ is precisely the class of reduced indecomposable polynomial rings. These facts, together with~\Cref{theorem-interpretability-R[X]-reduced-indecomposable}, imply that arithmetic is interpretable in indecomposable polynomial rings (uniformly in certain subclasses whose union covers the entire class). Similarly, these facts together with~\Cref{theorem-undecidability-R[X]-reduced-indecomposable} provide a uniform proof of undecidability for all indecomposable polynomial rings.

Before we proceed with the proofs of the pending results, we set the usual notation $\overline{b}$ for the image of $b \in S$ under the natural projection $S \to S / I$ (with $I$ and ideal in $S$).

\pagebreak

\begin{lemma}

Let \( S \) be a ring.

\begin{enumlemma}

\item \label{lemma-basic-nilradical-a} The sum of a unit and a nilpotent element is a unit.

\item \label{lemma-basic-nilradical-b} If \( f \in S \) is regular and idempotent, then \( f = 1 \).

\item \label{lemma-basic-nilradical-c} If \( S \) is indecomposable, then \( S / N_{S} \) is too.

\end{enumlemma}

\end{lemma}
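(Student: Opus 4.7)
The plan is to tackle the three items in sequence, since each is a standard piece of commutative algebra and the real work is only in \subcref{lemma-basic-nilradical-c}.

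For \subcref{lemma-basic-nilradical-a}, I would write $u + n = u \cdot ( 1 + u^{-1} n )$ and note that $u^{-1} n$ remains nilpotent because $S$ is commutative. It therefore suffices to prove that $1 + m$ is a unit whenever $m^{k} = 0$, and this follows from the telescoping identity $( 1 + m ) \bigl( 1 - m + m^{2} - \dotsb + ( - m )^{ k - 1 } \bigr) = 1 - ( - m )^{ k } = 1$. Part \subcref{lemma-basic-nilradical-b} is a one-line observation: the equation $f^{2} = f$ rewrites as $f \cdot f = f \cdot 1$, so regularity (cancellability) of $f$ yields $f = 1$.

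For \subcref{lemma-basic-nilradical-c}, the strategy is to show that any idempotent of $S / N_{S}$ can be lifted to an idempotent of $S$; since $S$ is indecomposable, the lift must lie in $\{ 0 , 1 \}$, and therefore the original class lies in $\{ \overline{0} , \overline{1} \}$, as desired. To perform the lift, suppose $\overline{e}^{2} = \overline{e}$, so that $n \coloneq e ( 1 - e ) \in N_{S}$, say $n^{m} = 0$. I would expand the identity $1 = \bigl( e + ( 1 - e ) \bigr)^{ 2m - 1 }$ via the binomial theorem and split the resulting sum as $1 = f + g$, where $f$ gathers the summands $\binom{ 2m - 1 }{ k } e^{ k } ( 1 - e )^{ 2m - 1 - k }$ with $k \geq m$ and $g$ the remaining ones. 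Every monomial appearing in the product $fg$ involves both a factor of $e^{ k }$ with $k \geq m$ and a factor of $( 1 - e )^{ j }$ with $j \geq m$, hence is a multiple of $\bigl( e ( 1 - e ) \bigr)^{ m } = 0$; so $fg = 0$, and then $f^{2} = f ( f + g ) = f$. Reducing modulo $N_{S}$ collapses every summand of $g$ except $( 1 - \overline{e} )^{ 2m - 1 } = 1 - \overline{e}$ (because $\overline{e} ( 1 - \overline{e} ) = 0$ kills all mixed terms), giving $\overline{g} = 1 - \overline{e}$ and therefore $\overline{f} = \overline{e}$.

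The main obstacle is indeed the lifting step in \subcref{lemma-basic-nilradical-c}: the splitting must be chosen carefully enough that $fg = 0$ drops out on the nose. An alternative route would be to invoke \subcref{lemma-basic-nilradical-a} to argue that $( 2 e - 1 )^{ 2 } = 1 + 4 ( e^{ 2 } - e )$ is a unit and then adjust $e$ by a nilpotent correction via a Newton-style iteration $e_{ k + 1 } = 3 e_{ k }^{ 2 } - 2 e_{ k }^{ 3 }$, which squares the nilpotency index at each step; however, the binomial trick above is cleaner and avoids the need to invert $2$.
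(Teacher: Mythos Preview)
Your argument is correct in all three parts. Items~\subcref{lemma-basic-nilradical-a} and~\subcref{lemma-basic-nilradical-b} match the paper's proof almost verbatim (the paper phrases~\subcref{lemma-basic-nilradical-a} as ``$u+s$ divides $u^{k}-(-s)^{k}=u^{k}$'', which is the same geometric-series identity in disguise).

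For item~\subcref{lemma-basic-nilradical-c} you take a genuinely different route. You perform the classical \emph{lifting of idempotents modulo a nil ideal}: expand $1=\bigl(e+(1-e)\bigr)^{2m-1}$, split it as $f+g$ so that $fg=0$, deduce $f^{2}=f$, and check $\overline{f}=\overline{e}$; indecomposability of $S$ then forces $f\in\{0,1\}$. The paper instead shows that \emph{either $e$ or $1-e$ is a unit in $S$}: writing $(1-e)^{k}=1-ce$, it verifies that $(ce)^{k}$ is idempotent in $S$, hence equals $0$ or $1$; in the first case $(1-e)^{k}=1+(-ce)$ is a unit by~\subcref{lemma-basic-nilradical-a}, in the second $e$ is. It then applies~\subcref{lemma-basic-nilradical-b} in the quotient to the invertible idempotent $\overline{e}$ or $\overline{1-e}$. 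Your approach is the textbook one and produces an honest idempotent lift; the paper's is a bit more ad hoc but ties the three items of the lemma together (it actually \emph{uses}~\subcref{lemma-basic-nilradical-b}, which your proof does not) and sidesteps the binomial bookkeeping.
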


\begin{proof} \leavevmode

\begin{enumerate}

\item Let $u \in S^{*}$ and $s \in N_{S}$. We have $s^{k} = 0$ for some $k \geq 1$, hence $u^{k} - ( -s )^{k} = u^{k}$ is a unit. Since $u + s = u - ( -s )$ divides $u^{k} - ( -s )^{k}$, the result follows.

\item Immediate from the equality $f^{2} = f$ and the regularity of $f$.

\item Suppose $S$ indecomposable, and let $e \in S$ be such that $( 1 - e ) e \in N_{S}$. Assume at this point that $e \in S^{*}$ or $1 - e \in S^{*}$. Therefore one of the elements $\overline{e}$ or $\overline{ 1 - e }$ is invertible and idempotent in $S / N_{S}$, so by item~\subcref{lemma-basic-nilradical-b} we have $\overline{e} = \overline{1}$ or $\overline{ 1 - e } = \overline{1}$, which is the desired result. 

In order to finish the proof, suppose $( 1 - e ) e \in N_{S}$, say $( 1 - e )^{k} e^{k} = 0$, with $k \geq 1$. Writing $( 1 - e )^{k} = 1 - ce$, with $c \in S$, and defining $d = \sum_{ i = 0 }^{ k - 1 } ( ce )^{i}$, we get $( 1 - e )^{k} d = ( 1 - ce ) d = 1 - (ce)^{k}$, hence
\[
( ce )^{k} [ 1 - ( ce )^{k} ] = c^{k} e^{k} [ ( 1 - e )^{k} d ] = c^{k} d [ ( 1 - e )^{k} e^{k} ] = 0.
\]
Thus, $( ce )^{k}$ is idempotent, hence it equals to $1$ or $0$ because $S$ is indecomposable. If $( ce )^{k} = 1$, then $e$ is invertible. Otherwise $ce \in N_{S}$, so trivially $-ce \in N_{S}$, and therefore $( 1 - e )^{k} = 1 + ( -ce )$ is a unit by item~\subcref{lemma-basic-nilradical-a}, so $1 - e$ is a unit in this case.\qedhere

\end{enumerate}

\end{proof}

\noindent For a set $\CX$ of indeterminates over a ring $R$ and an ideal $I$ in $R$, the universal property for polynomial rings guarantees the existence of a (unique) ring homomorphism $R[ \CX ] \to ( R / I ) [ \CX ]$ acting as the identity on $\CX$ and sending each element $r \in R$ to the element $r + I \in R / I$. This mapping is surjective and has kernel equal to the set $I[ \CX ]$ of polynomials with coefficients in $I$, so it induces an isomorphism between $R[ \CX ] / I[ \CX ]$ and $( R / I ) [ \CX ]$.

\begin{proposition}

Let \( R \) be a ring, and let \( S = R[ \CX ] \), with \( \CX \) a set of indeterminates over \( R \).

\begin{enumproposition}

\item \label{proposition-nilradical-polynomial-a} We have \( N_{S} = N_{R} [ \CX ] \), and therefore \( S / N_{S} \) is isomorphic to \( ( R / N_{R} ) [ \CX ] \).

\item \label{proposition-nilradical-polynomial-b} The units in \( S \) are precisely the polynomials with constant coefficient invertible in \( R \) and all the other coefficients nilpotent.

\item \label{proposition-nilradical-polynomial-c} \( N_{S} \) is precisely the set \( \{ f \in S \colon 1 + fg \in S^{*} \ \textnormal{for all} \ g \in S \} \). Consequently, \( N_{S} \) is uniformly definable in the whole class of polynomial rings.

\end{enumproposition}

\end{proposition}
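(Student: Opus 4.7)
The plan is to deduce all three items from the classical characterization of the nilradical as the intersection of the prime ideals of $S$, combined with~\Cref{lemma-basic-nilradical-a}. Since any given $f \in S = R[\CX]$ involves only finitely many indeterminates, I can pass at will to a polynomial subring in finitely many variables when convenient.

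For part~(a), the inclusion $N_R[\CX] \subseteq N_S$ is immediate: each monomial with nilpotent coefficient is nilpotent, and sums of nilpotents in the commutative ring $S$ remain nilpotent. For the reverse, I would fix $f \in N_S$ and an arbitrary prime $\mathfrak{p}$ of $R$, then apply the natural surjection $R[\CX] \twoheadrightarrow (R/\mathfrak{p})[\CX]$ (whose kernel $\mathfrak{p}[\CX]$ is recalled right before the statement); the image of $f$ is a nilpotent element of a polynomial ring over the domain $R/\mathfrak{p}$, hence is $0$, placing every coefficient of $f$ inside $\mathfrak{p}$. Intersecting over all $\mathfrak{p}$ puts the coefficients into $N_R$, whence $f \in N_R[\CX]$. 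The isomorphism $S/N_S \cong (R/N_R)[\CX]$ is then precisely the identification $R[\CX]/I[\CX] \cong (R/I)[\CX]$ applied to $I = N_R$.

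For part~(b), the ``if'' direction combines part~(a) with~\Cref{lemma-basic-nilradical-a}: a polynomial $f$ with invertible constant coefficient $a_0$ and nilpotent remaining coefficients splits as $a_0 + (f - a_0)$, a unit plus a nilpotent. Conversely, from any equation $fg = 1$, comparing constant coefficients yields $a_0 \in R^*$, while reducing modulo each prime $\mathfrak{p}$ of $R$ makes the image of $f$ a unit in $(R/\mathfrak{p})[\CX]$; units in a polynomial ring over an integral domain are constants, so every non-constant coefficient of $f$ lies in $\mathfrak{p}$, and hence in $N_R$ upon intersecting.

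For part~(c), one containment is a direct consequence of~\Cref{lemma-basic-nilradical-a}: if $f \in N_S$, then $fg \in N_S$ for every $g$, and $1 + fg$ is a unit. For the reverse, I would fix an indeterminate $y \in \CX$ (nonempty because $S$ is a polynomial ring) and specialize the hypothesis at $g = y$: writing $f = \sum_\alpha a_\alpha X^\alpha$, the polynomial $1 + fy$ has constant term $1$ while its remaining coefficients are precisely the $a_\alpha$, attached to the pairwise distinct monomials $y X^\alpha$. Part~(b) applied to $1 + fy \in S^*$ then forces every $a_\alpha$ to be nilpotent, giving $f \in N_R[\CX] = N_S$ by~(a). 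Uniform definability across the class of polynomial rings follows immediately from the formula
\[
\theta(f) \colon \ \ \Forall g \, \Exists h \, \bigl\Lparen (1 + fg) h = 1 \bigr\Rparen \, ,
\]
which involves no parameters depending on $R$ or $\CX$. The main subtlety, hovering over parts~(a) and~(b), is that the prime-ideal arguments must function for an arbitrary (possibly infinite) set $\CX$; this is handled cleanly by noting that each polynomial has finite support and thus lies in a finitely generated polynomial subring, where the classical statements apply verbatim.
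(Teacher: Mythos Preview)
Your argument is correct, but it diverges from the paper's in a way the authors single out as significant. You reduce modulo each prime $\mathfrak{p}$ of $R$ and then intersect, which relies on the identity $N_{R} = \bigcap_{\mathfrak{p}} \mathfrak{p}$; the paper instead proves~(a) and~(b) by direct elementary manipulation of coefficients in the univariate case (peeling off the leading term of a nilpotent, and an inductive annihilation argument for the inverse of a unit), then bootstraps to finitely many and arbitrary $\CX$ by the same finite-support reduction you mention. The payoff of your route is brevity and conceptual clarity; the payoff of theirs is that it avoids any form of choice. Indeed, the Remark immediately following the proposition states explicitly that the paper does \emph{not} use the characterization of the nilradical as the intersection of prime ideals, noting that this characterization is equivalent to the Boolean prime ideal theorem and hence independent of \textsf{ZF}. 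Your part~(c) is essentially identical to the paper's (specialize $g$ to an indeterminate and invoke~(b)), so the only substantive methodological difference lies in how~(a) and~(b) are obtained.
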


\begin{proof}

Since $N_{R} \subseteq N_{S}$, we always have $N_{R} [ \CX ] \subseteq N_{S}$; in particular, if $f = r + g$, with $r \in R^{*}$ and $g \in N_{R}[ \CX ]$, then $g \in N_{S}$, and so $f \in S^{*}$ by~\Cref{lemma-basic-nilradical-a}. Moreover, if $f \in N_{S}$, then for all $g \in S$ we have $fg \in N_{S}$, hence $1 + fg \in S^{*}$ (again by~\Cref{lemma-basic-nilradical-a}). Thus, the trivial inclusion in each one of items~\subcref{proposition-nilradical-polynomial-a}--\subcref{proposition-nilradical-polynomial-c} is proved.

Assume that items~\subcref{proposition-nilradical-polynomial-a}--\subcref{proposition-nilradical-polynomial-c} hold true in the univariate case. If $x_{1} , \dotsc , x_{n}$ are indeterminates over $R$, then $x_{n}$ is an indeterminate over $R' \coloneq R[ x_{1} , \dotsc , x_{ n - 1 } ]$ and $R[ x_{1} , \dotsc , x_{n} ]$ is isomorphic to $R' [ x_{n} ]$. Therefore, by applying induction on $n$ we prove items~\subcref{proposition-nilradical-polynomial-a} and~\subcref{proposition-nilradical-polynomial-b} whenever $\CX$ is finite.

In the general case, let $f \in S$ and let $S' = R [ \CX' ]$, with $\CX'$ being a finite subset of $\CX$ such that $f \in S'$. If $f \in N_{S}$, then $f \in N_{ S' } = N_{R} [ \CX' ] \subseteq N_{R} [ \CX ]$, which proves item~\subcref{proposition-nilradical-polynomial-a}. Similarly, If $f \in S^{*}$, then after enlarging $\CX'$ we may assume that $f , f^{ -1 } \in S'$, hence $f \in (S')^{*}$, and so from the finitely many variables case we conclude that $f$ has constant invertible coefficient and all the other coefficients nilpotent. On the other hand, the result of item~\subcref{proposition-nilradical-polynomial-c} follows directly after writing $S$ as a univariate polynomial ring (without the need of the intermediate reasoning used in the proof of items~\subcref{proposition-nilradical-polynomial-a} and~\subcref{proposition-nilradical-polynomial-b}).

Thus, it remains to prove the nontrivial inclusion in each one of items~\subcref{proposition-nilradical-polynomial-a}--\subcref{proposition-nilradical-polynomial-c} just in the univariate case. Let $S = R[x]$, with $x$ an indeterminate over $R$.

\begin{enumerate}

\item If $f = f_{d} x^{d} + \dotsb + f_{0} \in N_{S}$, with $d \geq 1$, then $f^{k} = 0$ for some $k \geq 1$. In particular the coefficient of $x^{ dk }$ in $f^{k}$, namely $f_{d}^{k}$, must be zero and so $f_{d} \in N_{R}$. Therefore $f - f_{d} x^{d} \in N_{S}$, and so we may iterate this argument to obtain that $f_{i} \in N_{R}$ for all $i$.

\item Let $f = f_{d} x^{d} + \dotsb + f_{0} \in S^{*}$, with $d \geq 1$, and let $g = g_{m} x^{m} + \dotsb + g_{0} \in S$ with $m \geq 1$ such that $fg = 1$. Recall that $fg = \sum_{ k = 0 }^{ d + m } c_{k} x^{k}$, where

\[
c_{k} = \sum_{ \substack{ 0 \leq i \leq d \\ 0 \leq j \leq m \\ i + j = k } } f_{i} g_{j} \, .
\]
In particular we have $1 = c_{0} = f_{0} g_{0}$, so both $f_{0}$ and $g_{0}$ are units in $R$, and $c_{k} = 0$ for $k = 1 , \dotsc , d + m$.

We claim that, for $r = 1 , \dotsc , m + 1$, multiplying by $f_{d}^{r}$ annihilates the coefficients $g_{ m - r + 1} , g_{ m - r + 2 } , \dotsc , g_{m}$. For $r = 1$ the claim follows from the equality $f_{d} g_{m} = c_{ d + m } = 0$ (because $0 < d + m$). Suppose that the claim holds for a value $r$ with $1 \leq r \leq m$, so that $0 < d + m - r$ (because $d \geq 1$ and $m - r \geq 0$), and therefore
\begin{align*}
0 & = c_{ d + m - r } \\[2mm]
  & = f_{d} g_{ m - r } + \Biggl( \, \sum_{ t = 1 }^{ \min \{ d , r \} } f_{ d - t } g_{ m - r + t } \Biggr) \\[2mm]
\shortintertext{ (recall that we must have $d - t \geq 0$ and $m - r + t \leq m$) } \\[-3mm]
  & = f_{d} g_{ m - r } + A \, .
\end{align*}
\vspace{1mm}
\noindent By induction hypothesis $f_{d}^{r}$ annihilates each coefficient of $g$ appearing in the summands of $A$ (they correspond to degrees $m - r + t \geq m - r +1$). Therefore we have $f_{d}^{r} A = 0$, and so by multiplying the equality above by $f_{d}^{r}$ one gets $0 = f_{d}^{ r + 1 } g_{ m - r } = f_{d}^{ r + 1 } g_{ m - ( r + 1 ) + 1 }$, and since $f_{d}^{ r + 1 }$ also annihilates $g_{ m - r + 1} , g_{ m - r + 2 } , \dotsc , g_{m}$, this completes the proof by induction.

Finally, by taking $r = m + 1$ we get $0 = f_{d}^{ r + 1 } g_{ m - ( m + 1 ) + 1 } = f_{d}^{ r + 1 } g_{0}$. As $g_{0}$ is a unit in $R$, we conclude that $f_{d}^{ r + 1 } = 0$, hence $f_{d} \in N_{R}$. Thus, we have $f_{d} x^{d} \in N_{S}$, and~\Cref{lemma-basic-nilradical-a} implies then that $f - f_{d} x^{d} \in S^{*}$. Iterating this argument we conclude that $f_{i} \in N_{R}$ for $i = 1 , \dotsc , d$.

\item If $f \in S$ satisfies $1 + fg \in S^{*}$ for all $g \in S$, then in particular $1 + xf \in S^{*}$, so by item~\subcref{proposition-nilradical-polynomial-b} all the coefficients of $1 + xf$ of positive degree are nilpotent. Since these coefficients are precisely the coefficients of $f$, we conclude that $f \in N_{R} [x] = N_{S}$ (by item~\subcref{proposition-nilradical-polynomial-a}).\qedhere

\end{enumerate}

\end{proof}

\begin{remark}

The set $J_{S} = \{ f \in S \colon 1 + fg \in S^{*} \ \textnormal{for all} \ g \in S \}$ appearing in the statement of~\Cref{proposition-nilradical-polynomial-c} is known, in the literature, as the \textbf{\noun{Jacobson} radical} (of the ring $S$), and it is equal to the intersection of all the maximal ideals in $S$, provided every nonunit in $S$ belongs to at least one such ideal. As the reader may check, our proof of~\Cref{proposition-nilradical-polynomial-c} does not rely at all on such characterization, and in particular we do not need to resort to these ideals (whose existence, in arbitrary nonzero commutative unital rings, is known to be equivalent to the axiom of choice~\cite{Hod79}). Similarly we do not use, in any part of our work, the characterization of the nilradical of a ring as the intersection of its prime ideals, which is known to be equivalent to \textsf{BPI}, the Boolean prime ideal theorem (this result is independent of \textsf{ZF}; see~\cite{HR98}*{Forms 14 AL and 14 AN}).

\end{remark}

\begin{bibdiv}
\begin{biblist}
\bibselect{BCN-Patch-Megazord}
\end{biblist}
\end{bibdiv}

\end{document}